\title[secondary characteristic classes]
{Secondary characteristic classes for subgroups of
automorphism groups of free groups}
\author{Shigeyuki Morita}
\address{Graduate School of Mathematical Sciences, 
The University of Tokyo, 
3-8-1 Komaba, 
Meguro-ku, Tokyo, 153-8914, Japan}
\email{morita@ms.u-tokyo.ac.jp}
\author{Takuya Sakasai}
\address{Graduate School of Mathematical Sciences, 
The University of Tokyo, 
3-8-1 Komaba, 
Meguro-ku, Tokyo, 153-8914, Japan}
\email{sakasai@ms.u-tokyo.ac.jp}
\author{Masaaki Suzuki}
\address{Department of Frontier Media Science, 
Meiji University, 
4-21-1 Nakano, Nakano-ku, Tokyo, 164-8525, Japan}
\email{macky@fms.meiji.ac.jp}
\thanks{
The authors were partially supported by KAKENHI (No.15H03618), 
Japan Society for the Promotion of Science, 
Japan.
}
\subjclass[2000]{Primary~20J06, Secondary~55R40, 32G15}
\keywords{Borel regulator class, automorphism group, free group, higher torsion, mapping class group, Morita class}
\newtheorem{thm}{Theorem}[section]
\newtheorem{prop}[thm]{Proposition}
\theoremstyle{definition}
\newtheorem{definition}[thm]{Definition}
\newtheorem{remark}[thm]{Remark}
\newtheorem{problem}[thm]{Problem}
\newtheorem{conj}[thm]{Conjecture}
\newtheorem{quest}[thm]{Question}
\begin{document}

\newcommand{\Mg}{\mathcal{M}_g}
\newcommand{\Mgp}{\mathcal{M}_{g,\ast}}
\newcommand{\Mgb}{\mathcal{M}_{g,1}}

\newcommand{\hg}{\mathfrak{h}_{g,1}}
\newcommand{\ag}{\mathfrak{a}_g}
\newcommand{\Ln}{\mathcal{L}_n}

\newcommand{\Sg}{\Sigma_g}
\newcommand{\Sgb}{\Sigma_{g,1}}
\newcommand{\la}{\lambda}

\newcommand{\Symp}[1]{Sp(2g,\mathbb{#1})}
\newcommand{\symp}[1]{\mathfrak{sp}(2g,\mathbb{#1})}
\newcommand{\gl}[1]{\mathfrak{gl}(n,\mathbb{#1})}

\newcommand{\At}[1]{\mathcal{A}_{#1}^t (H)}
\newcommand{\Hq}{H_{\mathbb{Q}}}

\newcommand{\Ker}{\mathop{\mathrm{Ker}}\nolimits}
\newcommand{\Hom}{\mathop{\mathrm{Hom}}\nolimits}
\renewcommand{\Im}{\mathop{\mathrm{Im}}\nolimits}

\newcommand{\Der}{\mathop{\mathrm{Der}}\nolimits}
\newcommand{\Out}{\mathop{\mathrm{Out}}\nolimits}
\newcommand{\Aut}{\mathop{\mathrm{Aut}}\nolimits}
\newcommand{\Q}{\mathbb{Q}}
\newcommand{\Z}{\mathbb{Z}}
\newcommand{\R}{\mathbb{R}}
\newcommand{\C}{\mathbb{C}}

\begin{abstract}

By analyzing how the Borel regulator classes vanish on
various groups related to $\mathrm{GL}(n,\Z)$, we define 
{\it three} series of 
secondary characteristic classes for subgroups of 
automorphism groups of free groups.

The first case is the $\mathrm{IA}$-automorphism groups and we show
that our 
classes coincide with higher $\mathrm{FR}$
torsions due to Igusa. The second case is the mapping class groups
and our classes also turn out to be his higher torsions 
which are non-zero multiples of
the Mumford-Morita-Miller classes of {\it even} indices. Our construction gives
new group cocycles for 
these still mysterious classes.
The third case is the outer automorphism groups of free groups
of specific ranks. Here we give a conjectural geometric meaning 
to a series of unstable homology classes called the Morita classes.
We expect that certain {\it unstable} secondary classes would detect them.

\end{abstract}

\renewcommand\baselinestretch{1.1}
\setlength{\baselineskip}{16pt}

\newcounter{fig}
\setcounter{fig}{0}

\maketitle

\section{Introduction and statements of the main results}\label{sec:intro}

The Borel regulator classes $\beta_{2k+1}\in H^{4k+1}(\mathrm{GL}(n,\Z);\R)\ (k=1,2,\ldots)$
are stable cohomology classes of $\mathrm{GL}(n,\Z)$ and they play fundamental 
roles in diverse branches of mathematics including number theory, algebraic geometry,
differential geometry and topology or more specifically algebraic $K$-theory and characteristic classes of
flat bundles with arithmetic structure groups.

There are two important groups related to $\mathrm{GL}(n,\Z)$. One is the outer automorphism
group of a free group of rank $n$, denoted by $\mathrm{Out}\,F_n$ and there exists a
natural homomorphism from this group onto $\mathrm{GL}(n,\Z)$. The other is the
integral symplectic group $\mathrm{Sp}(2g,\Z)$ which can be considered as a natural
subgroup of $\mathrm{GL}(2g,\Z)$.

Igusa \cite{i} proved that the pull back of all the Borel classes to $\mathrm{Out}\,F_n$ vanish
and, more recently, Galatius \cite{ga} proved a definitive 
result that the stable rational cohomology of $\mathrm{Out}\,F_n$ is trivial.
On the other hand, a classical theorem of Borel \cite{borel1} 
which determines the stable rational cohomology groups of 
both $\mathrm{GL}(n,\Z)$ and $\mathrm{Sp}(2g,\Z)$ 
implies that the restriction of the Borel classes to the latter group vanish.

By making use of these vanishing of the Borel regulator classes $\beta_{2k+1}$
in various ways, we define 
three series of 
secondary characteristic classes for subgroups of 
the automorphism groups of free groups.

First, in $\S 3$ we use the vanishing of $\beta_{2k+1}$ on $\mathrm{Out}\,F_n$,
due to Igusa and Galatius, to define a secondary class 
$$
T\beta_{2k+1}\in H^{4k}(\mathrm{IOut}_n;\R)
$$ 
(see Definition \ref{def:Tb}) , where $\mathrm{IOut}_n$ denotes
the subgroup of $\mathrm{Out}\, F_n$ consisting of all the elements which act
on the abelianization of $F_n$ trivially.

Second, in $\S 4$ we compare the above vanishing on $\mathrm{Out}\,F_n$ with that on
$\mathrm{Sp}(2g,\Z)$ for the case $n=2g$ to define another secondary class
$$
\hat{\beta}_{2k+1}\in H^{4k}(\mathcal{M}_{g,*};\R)
$$ 
(see Definition \ref{def:hb}), where $\mathcal{M}_{g,*}$ denotes the mapping class group of a genus $g$ closed
surface with a base point and it
can be considered as a subgroup of $\mathrm{Out}\, F_{2g}$.

We show that $T\beta_{2k+1}$ is nothing other than the higher $\mathrm{FR}$
torsion $\tau_{2k}(\mathrm{IOut}_n)$ of the group $\mathrm{IOut}_n$
due to Igusa \cite{i} (see Theorem \ref{thm:Tb}). We also show that $\hat{\beta}_{2k+1}$ is 
the same as his higher torsion class and this implies that it is a
non-zero 
multiple of the
$\mathrm{MMM}$ class $e_{2k}$
of even indices 
(see Theorem \ref{thm:hb}).
Our construction gives a direct
relation between the Borel classes and the $\mathrm{MMM}$ classes of even indices.

Finally in $\S 5$, which is the ``heart" of the present paper,  we give a conjectural geometric meaning of the Morita classes
$\mu_k$, which make a series of unstable homology classes of
$\mathrm{Out}\,F_n$. This is done by considering  certain {\it unstable} refinements of
the above construction (see Conjecture \ref{conj:mu} and Conjecture \ref{conj:mud}). 
This would give a possible way of proving
non-triviality of these classes. At present, only the first 
three classes are known to be non-trivial (\cite{morita99}\cite{cov}\cite{gr}).
We hope that the results in $\S 3$ and $\S 4$ would serve as supporting evidence for the 
above conjectures.

The present work began by a trial
to prove our conjecture on possible geometric meaning of the Morita classes
as sketched in Remark 9.6 of \cite{mss2} and further discussed in $\S 5$ of the present paper.
Meanwhile we found that, by using simpler ideas we can define certain secondary characteristic classes
for the $\mathrm{IA}$ automorphism groups as well as the mapping class groups. Then we noticed that
our secondary classes are nothing other than the higher $\mathrm{FR}$ torsion classes
due to Igusa (and Klein for the Torelli group)
developed in \cite{i} as stated above. Our results could have been obtained
right after Galatius' paper \cite{ga} appeared.
Indeed, it may be said that our construction ``realizes" Igusa's
higher $\mathrm{FR}$ torsions from the viewpoint of the theory of group cohomology
in the cases of the above two kinds of groups, based on the vanishing theorem of Galatius.

We hope that our construction would shed a new light on the difficult open 
problems of determining whether Igusa's higher torsion classes as well
as the $\mathrm{MMM}$ classes of even indices are non-trivial on
$\mathrm{IOut}_n$ and the Torelli group, respectively.
These two classes are {\it stable} characteristic classes in the sense that
they are defined for all ranks or genera.
On the other hand, the initial problem of giving a geometric meaning to the Morita 
classes treats the exact boundary line between the stable and unstable ranges
for the Borel classes.

\section{Preliminaries}\label{sec:prl}

In this section, we recall basic theorems concerning the stable cohomology of
various groups which we use in this paper. 
They are, $\mathrm{GL}(n,\Z),
\mathrm{Sp}(2g,\Z), \mathrm{Aut}\,F_n$ and the mapping class group
$\mathcal{M}_g$.
First, we have the following classical result.

\begin{thm}[Borel \cite{borel1}]
The cohomology groups $H^*(\mathrm{GL}(n,\Z);\Q)$ and $H^*(\mathrm{Sp}(2g,\Z);\Q)$
stabilize, with respect to $n$ and $g$ respectively, and the stable cohomology groups are
given as follows.
\begin{align*}
\lim_{n\to\infty} H^*(\mathrm{GL}(n,\Z);\R)&\cong \wedge_\R(\beta_3,\beta_5,\ldots)\\
\lim_{n\to\infty} H^*(\mathrm{Sp}(n,\Z);\Q)&\cong \Q[c_1,c_3,\ldots].
\end{align*}
\label{thm:borel}
\end{thm}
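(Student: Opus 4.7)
The plan is to follow Borel's original strategy, passing from the arithmetic group to a locally symmetric space and then to relative Lie algebra cohomology. Write $G$ for either $\mathrm{GL}(n,\R)$ or $\mathrm{Sp}(2g,\R)$, with maximal compact subgroup $K$ equal to $O(n)$ or $U(g)$ respectively, and write $\Gamma$ for the corresponding arithmetic group. First I would replace $\Gamma$ by a torsion-free subgroup $\Gamma'$ of finite index; because finite groups have trivial rational cohomology, $H^*(\Gamma;\R)$ is recovered as $(\Gamma/\Gamma')$-invariants in $H^*(\Gamma';\R)$. Since $\Gamma'$ acts freely and properly discontinuously on the symmetric space $X = G/K$, which is contractible, we have $H^*(\Gamma';\R) \cong H^*(\Gamma'\backslash X;\R)$.

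Next I would introduce the Matsushima--Borel comparison map
\begin{equation*}
H^*(\mathfrak{g}, K; \R) \;\longrightarrow\; H^*(\Gamma'\backslash X;\R),
\end{equation*}
which sends a $K$-invariant element of $\wedge^*(\mathfrak{g}/\mathfrak{k})^*$ to the corresponding $G$-invariant, and hence $\Gamma'$-invariant, closed differential form on $X$. The heart of the proof is Borel's stability theorem, asserting that this map is an isomorphism in a range $* \le c(n)$ (resp.\ $c(g)$) that tends to infinity. Its proof proceeds through the Borel--Serre compactification of $\Gamma'\backslash X$, an inductive spectral sequence analysis of the boundary strata indexed by rational parabolic subgroups, Garland--type vanishing for the cohomology of the nilmanifold fibres at the cusps, and growth estimates on invariant forms coming from reduction theory.

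It then remains to compute the target $H^*(\mathfrak{g},K;\R)$ by standard invariant theory on the compact dual symmetric space. For $(G,K) = (\mathrm{GL}(n,\R), O(n))$, the $O(n)$-invariants in $\wedge^*(\mathfrak{g}/\mathfrak{k})^*$ stabilize to an exterior algebra on the primitive transgression classes $\beta_{2k+1}$ in degree $4k+1$, reproducing $\wedge_\R(\beta_3,\beta_5,\ldots)$. For $(G,K) = (\mathrm{Sp}(2g,\R), U(g))$, the analogous $U(g)$-invariant computation produces the polynomial ring on the odd-indexed Chern classes $c_1, c_3, \ldots$ of the standard complex representation of $U(g)$ pulled back to $\mathrm{Sp}(2g,\Z)$.

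By a wide margin the main obstacle is the stability theorem: controlling the boundary contributions in the Borel--Serre compactification and proving that no extra cohomology enters in the stable range. This analytic and geometric input constitutes essentially the entire content of \cite{borel1}, and in practice one would invoke it as a black box rather than reconstruct it here; the invariant-theoretic computation in the last step is then routine and matches the two named generating sets.
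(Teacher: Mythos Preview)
The paper does not give its own proof of this statement: it is quoted as a classical theorem of Borel and simply cited from \cite{borel1}. So there is nothing in the paper to compare your argument against at the level of proof; the authors treat the result as an input.

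That said, your sketch is a faithful outline of Borel's original method in \cite{borel1}: pass to a torsion-free finite-index subgroup, identify group cohomology with the de Rham cohomology of the locally symmetric space $\Gamma'\backslash G/K$, use the Matsushima--Borel map from $H^*(\mathfrak{g},K;\R)$, invoke Borel's stability theorem (whose proof runs through the Borel--Serre compactification and control of the boundary), and finish with the invariant-theoretic computation on the compact dual. Your identification of the stability theorem as the main analytic input, to be taken as a black box, is exactly right; the explicit generators $\beta_{2k+1}$ and $c_{2i-1}$ then fall out of the standard $(\mathfrak{g},K)$-cohomology calculations for $\mathrm{GL}(n,\R)/O(n)$ and $\mathrm{Sp}(2g,\R)/U(g)$. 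There is no gap in your outline beyond the acknowledged one of not reproducing Borel's stability argument itself.
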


Here
$$
\beta_{2k+1}\in H^{4k+1}(\mathrm{GL}(n,\Z);\R)\ (k=1,2,\ldots)
$$
denote the Borel regulator classes and
$c_1, c_3,\ldots, c_{2i-1},\ldots\in H^*(\mathrm{Sp}(2g,\Z);\Q)$ denote the Chern classes of the universal 
$g$-dimensional complex vector bundle over the classifying space of the group $\mathrm{Sp}(2g,\Z)$.
The Borel classes are stable classes in the sense that they
are pull backs of the Borel regulator classes 
$\beta_k\in H^{2k-1}(\mathrm{BGL}(\infty,\C)^\delta;\C)\ (k=1,2,\ldots)$
under the natural inclusion
$
i: \mathrm{GL}(n,\Z)\rightarrow \mathrm{GL}(\infty,\C)^\delta
$
where 
$$
\mathrm{GL}(\infty,\C)^\delta=\lim_{n\to\infty} \mathrm{GL}(n,\C)^\delta
$$
equipped with the {\it discrete} topology. 
See \cite{d}\cite{cs} for geometric background of these classes. 
We recall a property of the Borel classes for later use.
\begin{prop}[see Dupont-Hain-Zucker \cite{dhz}]
The Borel regulator class $\beta_{2k+1}$ is primitive. Namely, 
if we denote by
$$
\bar{\mu}: \mathrm{GL}(n,\Z)\times \mathrm{GL}(n',\Z)\rightarrow \mathrm{GL}(n+n',\Z)
$$
the natural inclusion, then we have
$$
\bar{\mu}^*(\beta_{2k+1})=\beta_{2k+1}\otimes 1+1\otimes \beta_{2k+1}.
$$
\label{prop:betap}
\end{prop}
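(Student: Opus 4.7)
The plan is to reduce the stated identity to the corresponding universal statement on $\mathrm{GL}(\infty,\C)^\delta$ and deduce primitivity of $\beta_{2k+1}$ from the Hopf-algebra structure induced by block-diagonal sum, by tracing the class back to a primitive generator in the continuous (equivalently, relative Lie-algebra) cohomology of $\mathrm{GL}(\C)$.

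\medskip

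First I would factor the composition
\[
\mathrm{GL}(n,\Z)\times\mathrm{GL}(n',\Z)\xrightarrow{\bar{\mu}}\mathrm{GL}(n+n',\Z)\xrightarrow{i}\mathrm{GL}(\infty,\C)^\delta
\]
as
\[
\mathrm{GL}(n,\Z)\times\mathrm{GL}(n',\Z)\xrightarrow{i\times i}\mathrm{GL}(\infty,\C)^\delta\times\mathrm{GL}(\infty,\C)^\delta\xrightarrow{\oplus}\mathrm{GL}(\infty,\C)^\delta,
\]
where $\oplus$ is the block-diagonal direct sum (after a once-for-all identification $\C^\infty\oplus\C^\infty\cong\C^\infty$). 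Since $\beta_{2k+1}$ on $\mathrm{GL}(n+n',\Z)$ is by definition the pullback along $i$ of a universal class $\beta_{2k+1}^{u}\in H^{4k+1}(\mathrm{GL}(\infty,\C)^\delta;\R)$, the proposition reduces to proving the equality $\oplus^{*}(\beta_{2k+1}^{u})=\beta_{2k+1}^{u}\otimes 1+1\otimes\beta_{2k+1}^{u}$. The operation $\oplus$ turns $B\mathrm{GL}(\infty,\C)^\delta$ into a homotopy-commutative H-space, whence $H^{*}(\mathrm{GL}(\infty,\C)^\delta;\R)$ carries a Hopf-algebra structure with coproduct $\oplus^{*}$, and the required identity is precisely the statement that $\beta_{2k+1}^{u}$ is primitive in this Hopf algebra.

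\medskip

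To prove that primitivity, I would use that $\beta_{2k+1}^{u}$ is the image of a continuous cohomology class $\tilde{\beta}_{2k+1}\in H^{4k+1}_{c}(\mathrm{GL}(\C);\R)$ under the canonical change-of-topology morphism from continuous to discrete group cohomology. Via the Van Est isomorphism, $\tilde{\beta}_{2k+1}$ corresponds to one of the standard primitive generators of the relative Lie-algebra cohomology $H^{*}(\mathfrak{gl}(\C),\mathfrak{u};\R)$, where primitivity refers to the coproduct induced by the Lie-algebra direct sum. Both the Van Est isomorphism and the continuous-to-discrete comparison are natural with respect to direct sum, so they carry the Lie-algebraic coproduct to the continuous one and then to $\oplus^{*}$, preserving primitive elements at every stage. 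Pulling back the resulting universal identity along $i\times i$ yields the proposition.

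\medskip

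\textbf{Main obstacle.} The non-formal input is the classical fact that the stable generators of $H^{*}(\mathfrak{gl}(\C),\mathfrak{u};\R)$ are primitive for the direct-sum coproduct; the remaining technical care is to check that each comparison map (Van Est, change of topology, and restriction to $\mathrm{GL}(n,\Z)$) is a morphism of Hopf algebras, after which the conclusion is formal.
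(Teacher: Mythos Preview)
The paper does not give its own proof of this proposition: it is stated with a reference to Dupont--Hain--Zucker \cite{dhz} and used as a black box. Your outline is a correct sketch of the standard argument found in that reference, namely tracing the universal class back to continuous (and hence relative Lie-algebra) cohomology, where the stable generators are primitive for the direct-sum coproduct, and then transporting primitivity forward through the Van Est and change-of-topology maps. There is nothing to compare against in the paper itself; your proposal is consistent with the cited source and fills in what the authors take for granted.
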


Next we consider the (outer) automorphism groups of free groups
$\mathrm{Aut}\, F_n$ and $\mathrm{Out}\,F_n$.
Hatcher and Vogtmann \cite{HV} (see also \cite{HVW})
proved that homology groups of these groups stabilize and 
Galatius determined the stable homology groups as follows.
Let $\mathrm{Aut}_\infty$ denote $\lim_{n\to\infty} \mathrm{Aut}\,F_n$
and let $QS^0=\lim_{n\to\infty} \Omega^n S^n$.
\begin{thm}[Galatius \cite{ga}]
There exists a homology equivalence (H.e. for short)
$$
\Z\times\mathrm{BAut}_\infty\overset{H.e.}{\cong}QS^0.
$$
In particular
$$
\lim_{n\to\infty} H^*(\mathrm{Aut}\, F_n;\Q)\cong \lim_{n\to\infty} H^*(\mathrm{Out}\, F_n;\Q)\cong\Q.
$$
\label{thm:galatius}
\end{thm}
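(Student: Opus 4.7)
The plan is to adapt the Madsen--Weiss paradigm from smooth manifold cobordisms to a category of graphs (finite one-dimensional CW complexes). First I would introduce a topological cobordism category $\mathcal{G}$ whose objects are finite subsets of $\R^\infty$ and whose morphisms are (appropriately decorated) graphs embedded in $\R^\infty \times [0,1]$ that restrict to these objects on the boundary. A suitable basepointed variant of the connected morphism space, decomposed by first Betti number $n$, recovers $\coprod_n B\Aut F_n$ via the Culler--Vogtmann outer space model.

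Next comes the Pontryagin--Thom / scanning step: produce a weak equivalence between $B\mathcal{G}$ and the infinite loop space of a Thom spectrum $\mathbf{G}$ built from the universal normal data of a graph in $\R^N$. Because a one-dimensional complex carries no nontrivial tangential homotopy type after stabilization, and the local contribution of a vertex can be absorbed by a standard link-based simplification, the spectrum $\mathbf{G}$ collapses to the sphere spectrum and its infinite loop space is $QS^0$. Combined with the Hatcher--Vogtmann--Wahl homological stability theorem for $\Aut F_n$, which identifies the successive stabilization map with concatenation by an elementary genus-one cobordism in $\mathcal{G}$, the group completion theorem then yields the desired equivalence $\Z \times B\Aut_\infty \overset{H.e.}{\cong} QS^0$.

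The hardest step will be the graph Pontryagin--Thom construction: since graphs have no tangent bundle at vertices, tubular neighborhoods must be modeled by stratifying according to vertex valence, and the resulting Thom-space identification requires a careful analysis of the normal data at each stratum. Once the equivalence with $QS^0$ is established, the rational consequence is immediate: by Serre's theorem the positive-dimensional stable stems are finite, hence $H^*(QS^0;\Q) \cong \Q$ is concentrated in degree zero. For $\Out F_n$ the same conclusion follows from the rational homology isomorphism $H_*(\Aut F_n;\Q) \to H_*(\Out F_n;\Q)$ in the stable range due to Hatcher--Vogtmann.
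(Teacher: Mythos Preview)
The paper does not supply its own proof of this statement: Theorem~\ref{thm:galatius} is quoted as a result of Galatius \cite{ga}, listed in the Preliminaries section alongside the theorems of Borel and Madsen--Weiss, and used throughout as a black box. So there is no ``paper's own proof'' to compare against.

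That said, your outline is a faithful high-level summary of Galatius's actual argument, and the strategy is sound. The one place I would press you is the sentence ``the spectrum $\mathbf{G}$ collapses to the sphere spectrum.'' This is genuinely the heart of the proof and is not a formality: Galatius establishes it via a sheaf-theoretic model (spaces of graphs in $\R^N$) and a delicate scanning argument, not by a direct stratification-by-valence analysis of tubular neighborhoods as you suggest. The identification of the local model with $S^0$ (rather than something larger recording vertex types) requires showing that the space of graphs near a point deformation-retracts appropriately, and this is where most of the technical work in \cite{ga} lives. Your description of the difficulty is accurate, but the proposed resolution (``stratifying according to vertex valence'') is not quite how it goes and would likely run into trouble; the actual argument passes through a comparison of sheaves and uses a version of Vassiliev's $h$-principle. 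If you intend to write this up in detail you should expect that step to dominate the effort.
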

Finally we consider the mapping class groups. Let $\mathcal{M}_g$ denote the mapping class
group of a closed oriented surface $\Sigma_g$ of genus $g$. Also let $\mathcal{M}_{g,*}$
and $\mathcal{M}_{g,1}$ be the mapping class groups of $\Sigma_g$ relative to
a base point and embedded disk, respectively.
Harer \cite{harer} proved that the (co)homology groups of the mapping class groups
stabilize and Madsen and Weiss determined the stable rational cohomology as follows.
\begin{thm}[Madsen-Weiss \cite{mw}]
$$
\lim_{g\to\infty} H^*(\mathcal{M}_g;\Q)\cong \lim_{g\to\infty} H^*(\mathcal{M}_{g,1};\Q)
\cong \Q[\text{$\mathrm{MMM}$ classes}].
$$
\label{thm:mw}
\end{thm}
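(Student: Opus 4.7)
The plan is to follow the strategy of Madsen-Weiss via the Madsen-Tillmann spectrum. Let $\mathbf{MTSO}(2)$ denote the Thom spectrum of the virtual bundle $-L$ over $BSO(2)=\mathbb{CP}^\infty$, where $L$ is the tautological complex line bundle. The parameterized Pontrjagin-Thom construction applied to an oriented surface bundle $\pi\colon E\to B$ with closed fibres produces a canonical map
$$
\alpha\colon \Z\times B\mathcal{M}_\infty^+\longrightarrow \Omega^\infty \mathbf{MTSO}(2).
$$
The theorem follows once one shows that $\alpha$ is a homology equivalence and then computes the rational cohomology of the target.

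The first step is to identify the classifying space of the two-dimensional oriented cobordism category $\mathcal{C}_2$ of surfaces embedded in $\R^\infty$ with $\Omega^{\infty-1}\mathbf{MTSO}(2)$. This is the deep homotopy-theoretic input: one stratifies the space of embedded surfaces by the singularities of a generic height function, and carries out a parameterized surgery/scanning argument to successively remove strata. Combined with Segal's group completion applied to the topological monoid of self-cobordisms of $\Sigma_{g,1}$ and with Harer homological stability (stated in the excerpt), this would imply that $\alpha$ is a homology equivalence on each connected component.

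The second step is a rational computation of the target. Since any connected infinite loop space is rationally a product of Eilenberg-MacLane spaces, for a spectrum $X$ one has
$$
H^*(\Omega^\infty_0 X;\Q)\cong \wedge^*_\Q\bigl(\tilde H^{>0}(X;\Q)\bigr).
$$
The Thom isomorphism gives $H^k(\mathbf{MTSO}(2);\Q)\cong H^{k+2}(\mathbb{CP}^\infty;\Q)$, yielding one generator in each even degree $\geq -2$. The positive-degree generators pull back under $\alpha^*$ to the MMM classes $e_1,e_2,\ldots$ because, by definition, $e_k=\pi_!\bigl(e(T_\pi)^{k+1}\bigr)$ is the fibre integral of a power of the Euler class of the vertical tangent bundle, which is exactly what the Pontrjagin-Thom map records.

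The main obstacle is clearly the first step: the homotopy-theoretic identification $B\mathcal{C}_2\simeq \Omega^{\infty-1}\mathbf{MTSO}(2)$. The rational calculation of $H^*(\Omega^\infty \mathbf{MTSO}(2);\Q)$ and its matching with the MMM classes via fibre integration is essentially formal once the hard identification is available; the genuinely difficult geometric content is the parameterized cobordism-category equivalence together with the use of Harer stability to pass from the stable mapping class group to a connected component of the infinite loop space.
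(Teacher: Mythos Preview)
The paper does not supply its own proof of this theorem: it is quoted in the preliminaries section as a background result and attributed to Madsen--Weiss \cite{mw}, with no argument given. So there is no ``paper's own proof'' to compare against.

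Your outline is a faithful sketch of the actual Madsen--Weiss strategy: the parameterized Pontrjagin--Thom map to $\Omega^\infty\mathbf{MTSO}(2)$, the identification of the cobordism category classifying space with $\Omega^{\infty-1}\mathbf{MTSO}(2)$, group completion together with Harer stability to get a homology equivalence on components, and the straightforward rational calculation of the target via the Thom isomorphism. You are also right that the hard content is entirely in the cobordism-category step, which you correctly flag but do not carry out. As a high-level summary this is accurate; as a self-contained proof it is of course incomplete, but that is unavoidable given the depth of the result, and is exactly why the paper simply cites it.
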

We also have the following result.
\begin{thm}[see Harer \cite{harerthird}, Looijenga \cite{looijenga} and Madsen-Weiss \cite{mw}]
$$
\lim_{g\to\infty} H^*(\mathcal{M}_{g,*};\Q)
\cong \Q[e, \text{$\mathrm{MMM}$ classes}].
$$
\label{thm:looijenga}
\end{thm}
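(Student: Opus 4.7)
The plan is to reduce the computation to Madsen--Weiss (Theorem~\ref{thm:mw}) by running the Leray--Serre spectral sequence of the Birman fibration. From the Birman exact sequence
$$1\longrightarrow\pi_1(\Sigma_g,*)\longrightarrow\mathcal{M}_{g,*}\longrightarrow\mathcal{M}_g\longrightarrow 1$$
we obtain a fibration $\Sigma_g\to B\mathcal{M}_{g,*}\xrightarrow{\pi} B\mathcal{M}_g$ that realises $B\mathcal{M}_{g,*}$ as the total space of the universal $\Sigma_g$-bundle. The class $e\in H^2(\mathcal{M}_{g,*};\Q)$ of the statement is the Euler class of the vertical tangent bundle of this fibration, and the MMM classes $e_i$ on the base are by definition the fibre integrals $\pi_!(e^{i+1})$, so the two sides of the desired isomorphism share a common geometric origin.

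First I would set up the Leray--Serre spectral sequence with $E_2^{p,q}=H^p(\mathcal{M}_g;H^q(\Sigma_g;\Q))$ converging to $H^{p+q}(\mathcal{M}_{g,*};\Q)$. Since $\mathcal{M}_g$ acts trivially on $H^0$ and $H^2$ of the fibre, Theorem~\ref{thm:mw} identifies the $q=0$ and $q=2$ rows in the stable range with $\Q[\text{MMM classes}]$. The $q=1$ row is the twisted cohomology $H^*_{\mathrm{stab}}(\mathcal{M}_g;\Hq)$ with coefficients in the standard symplectic representation $\Hq=H_1(\Sigma_g;\Q)$, which is accessible stably via Kawazumi's theorem and the related work of Morita on the extended Johnson homomorphism.

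The next step is to pin down the differentials. Multiplicativity of the spectral sequence, together with the defining relation $e_1=\pi_!(e^2)$, forces the $d_2$ transgression from $E_2^{0,2}$ to $E_2^{2,1}$ to hit a generator of $H^2(\mathcal{M}_g;\Hq)$; further differentials out of the $q=2$ row then consume the twisted $q=1$ row in a pattern dictated by the ``polynomial relation'' $e\cdot(\text{fibre class})=e^2$. What survives on $E_\infty$ is exactly a polynomial algebra on the pull-backs of the MMM classes from the base together with a degree $2$ class $e$ lifting the fibre cohomology generator, and a Poincar\'e-series comparison with $\Q[e,\text{MMM classes}]$ confirms the match term by term.

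The main obstacle is the book-keeping for the $q=1$ row: one must show that the twisted cohomology $H^*_{\mathrm{stab}}(\mathcal{M}_g;\Hq)$ is concentrated in precisely those bidegrees where $d_2$ from the $q=2$ row can eliminate it, leaving no spurious odd-degree survivors in $H^*(\mathcal{M}_{g,*};\Q)$. This is the step where one leans most heavily on the independent treatments of Harer \cite{harerthird} and Looijenga \cite{looijenga}, who handle the pointed mapping class group by direct stability methods without routing through the symplectic twisted cohomology.
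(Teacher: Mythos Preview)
The paper does not prove this theorem; it is stated with references to Harer, Looijenga and Madsen--Weiss and used as input. So let me assess your sketch on its own terms.

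Your framework---the Leray--Serre spectral sequence of the Birman fibration with Theorem~\ref{thm:mw} supplying the $q=0$ and $q=2$ rows---is the natural one, but your analysis of the differentials is incorrect. The Euler class $e\in H^2(\mathcal{M}_{g,*};\Q)$ restricts on a fibre to the Euler class of $T\Sigma_g$, which is $(2-2g)$ times the generator of $H^2(\Sigma_g;\Q)$; for $g\ge 2$ this is nonzero, so the generator of $E_2^{0,2}$ is a \emph{permanent cycle} and $d_2:E_2^{0,2}\to E_2^{2,1}$ vanishes. By multiplicativity (the $q=2$ row is $E_2^{*,0}$ times this generator) every differential leaving the $q=2$ row vanishes. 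On the other side, the projection formula $\pi_!(\pi^*x\cdot e)=(2-2g)\,x$ shows that $\pi^*$ is injective, so nothing in the $q=0$ row is hit either. Hence the spectral sequence degenerates at $E_2$, and the $q=1$ row $H^*(\mathcal{M}_g;\Hq)$ survives intact to $E_\infty$; it is not ``consumed'' by anything.

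This also breaks your Poincar\'e-series check. If only the $q=0$ and $q=2$ rows survived, the total series would be $(1+t^2)\prod_{i\ge1}(1-t^{2i})^{-1}$, whereas $\Q[e,e_1,e_2,\dots]$ has series $(1-t^2)^{-1}\prod_{i\ge1}(1-t^{2i})^{-1}$; the discrepancy $\tfrac{t^4}{1-t^2}\prod_{i\ge1}(1-t^{2i})^{-1}$ is exactly the (degree-shifted) contribution of the $q=1$ row. The real content of the theorem is therefore an \emph{independent} computation of the stable twisted cohomology $H^*(\mathcal{M}_g;\Hq)$---that it sits in odd degrees $\ge 3$ with precisely this series---together with solving the multiplicative extension problem so that $e$ becomes a polynomial generator rather than a square-zero class. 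That computation is the substance of Looijenga's paper \cite{looijenga}; it does not fall out of the differential pattern you describe.
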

As for the stable ranges, we refer to \cite{borel1}\cite{vdKallen}\cite{suslin} for $\mathrm{GL}(n,\Z)$,
\cite{borel1}\cite{charney} for $\mathrm{Sp}(2g,\Z)$, \cite{HV}\cite{HVW} for the (outer) automorphism
groups of free groups
and \cite{wahl1} as well as references therein for the mapping class groups.

\section{Secondary characteristic classes for the $\mathrm{IA}$ automorphism group}\label{sec:scia}

In this section, we consider the $\mathrm{IA}$ automorphism group, denoted by $\mathrm{IA}_n$ 
which is defined to be the kernel of the natural projection $p: \mathrm{Aut}\, F_n\rightarrow \mathrm{GL}(n,\Z)$.
Thus we have the following exact sequence
$$
1\rightarrow \mathrm{IA}_n\overset{i}{\rightarrow}\mathrm{Aut}\, F_n\overset{p}{\rightarrow}
 \mathrm{GL}(n,\Z)\rightarrow 1.
$$
The outer automorphism group of $F_n$ is defined as $\mathrm{Out}\,F_n=\mathrm{Aut}\,F_n/\mathrm{Inn}\, F_n$
and we have the following  similar exact sequence
$$
1\rightarrow \mathrm{IOut}_n\overset{i}{\rightarrow}\mathrm{Out}\, F_n\overset{p}{\rightarrow}
 \mathrm{GL}(n,\Z)\rightarrow 1.
$$

The first of our three secondary characteristic classes are elements of
$H^*(\mathrm{IA}_n;\R)$ and $H^*(\mathrm{IOut}_n;\R)$ defined as follows.
Let
$$
b_{2k+1}\in Z^{4k+1}(\mathrm{GL}(n,\Z);\R)
$$
be a $(4k + 1)$-cocycle of the group $\mathrm{GL}(n,\Z)$ which represents the Borel 
regulator class
$\beta_{2k+1}\in H^{4k+1}(\mathrm{GL}(n,\Z);\R)$.
We know by Igusa that 
$$
p^*\beta_{2k+1}=0\in H^{4k+1}(\mathrm{Out}\,F_n;\R).
$$
Hence we can choose a $4k$-cochain
$$
z_{4k}\in C^{4k}(\mathrm{Out}\, F_n;\R)
$$
such that $\delta z_{4k}= p^*b_{2k+1}$. The restriction $i^* z_{4k}$ of $z_{4k}$
to the subgroup $\mathrm{IOut}_n\overset{i}{\subset}\mathrm{Out}\,F_n$ is a
cocycle and we can consider its cohomology class
$$
[i^* z_{4k}]\in H^{4k}(\mathrm{IOut}_n;\R).
$$

\begin{prop}
The cohomology class $[i^* z_{4k}]\in H^{4k}(\mathrm{IOut}_n;\R)$ 
is well-defined independent of the choices of $b_{2k+1}$ and $z_{4k}$
in the stable range $n\geq 2k+4$ of $\mathrm{Out}\,F_n$.
Furthermore it is $\mathrm{GL}(n,\Z)$-invariant so that
$$
[i^* z_{4k}]\in H^{4k}(\mathrm{IOut}_n;\R)^{\mathrm{GL}(n,\Z)}.
$$
\label{prop:binv}
\end{prop}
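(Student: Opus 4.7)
The plan is to check that $[i^* z_{4k}]$ is independent of both choices of $b_{2k+1}$ and $z_{4k}$ separately, and then to verify the $\mathrm{GL}(n,\Z)$-invariance via a cochain-level conjugation argument. Throughout I would work with normalized cochains, so that the pullback along the constant (trivial) homomorphism $p\circ i\colon \mathrm{IOut}_n\rightarrow \mathrm{GL}(n,\Z)$ vanishes identically in positive degree.

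To see independence from the cocycle $b_{2k+1}$, replace it by $b_{2k+1}+\delta c_{4k}$; then I may replace $z_{4k}$ by $z_{4k}+p^* c_{4k}$, and the restriction of the added term to $\mathrm{IOut}_n$ is $(p\circ i)^* c_{4k}=0$. For $b_{2k+1}$ fixed, two choices of primitive $z_{4k}$ differ by a cocycle $w_{4k}\in Z^{4k}(\mathrm{Out}\,F_n;\R)$. Here is where Galatius' theorem (Theorem \ref{thm:galatius}) enters: in the stable range $n\geq 2k+4$ supplied by \cite{HVW}, we have $H^{4k}(\mathrm{Out}\,F_n;\R)=0$, so $w_{4k}=\delta u$ and hence $i^* w_{4k}=\delta(i^* u)$ is a coboundary on $\mathrm{IOut}_n$. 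This settles the well-definedness.

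For $\mathrm{GL}(n,\Z)$-invariance, the conjugation action of $\mathrm{Out}\,F_n$ on $\mathrm{IOut}_n$ induces an action on $H^*(\mathrm{IOut}_n;\R)$ which descends to one of $\mathrm{GL}(n,\Z)$, since inner automorphisms of $\mathrm{IOut}_n$ act trivially on its own cohomology. It therefore suffices to prove that for every $g\in\mathrm{Out}\,F_n$, conjugation $c_g$ on $\mathrm{IOut}_n$ fixes $[i^* z_{4k}]$. Let $C_g$ denote conjugation by $g$ on all of $\mathrm{Out}\,F_n$; then $p\circ C_g=c_{p(g)}\circ p$, and the standard cochain homotopy witnessing the triviality of the conjugation action of $\mathrm{GL}(n,\Z)$ on $H^*(\mathrm{GL}(n,\Z);\R)$ gives $c_{p(g)}^* b_{2k+1}=b_{2k+1}+\delta \eta$ for some $\eta\in C^{4k}(\mathrm{GL}(n,\Z);\R)$. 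Consequently $C_g^* z_{4k}-z_{4k}-p^*\eta$ is a cocycle on $\mathrm{Out}\,F_n$, hence a coboundary by the same application of Galatius' vanishing. Restricting to $\mathrm{IOut}_n$, $p^*\eta$ is killed, so $(c_g|_{\mathrm{IOut}_n})^*\, i^* z_{4k}$ is cohomologous to $i^* z_{4k}$.

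The main obstacle I anticipate is ensuring that none of the cochain-level manipulations (the cochain homotopy making conjugation trivial, the primitives $u$ and $\eta$ produced along the way) inject uncontrollable terms upon restriction to $\mathrm{IOut}_n$. Every such term turns out to be either a pullback via $p$ (which factors through the trivial homomorphism on $\mathrm{IOut}_n$ and hence vanishes under normalization) or a cocycle on $\mathrm{Out}\,F_n$ (which is cohomologically trivial thanks to Galatius). The delicate point is that the stable range for $\mathrm{Out}\,F_n$ must cover both degrees $4k$ and $4k+1$ simultaneously; the assumption $n\geq 2k+4$ is precisely what guarantees this.
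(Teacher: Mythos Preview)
Your proof is correct and follows essentially the same route as the paper's: both use that $i^*p^*=0$ to kill terms pulled back from $\mathrm{GL}(n,\Z)$, and both invoke Galatius' vanishing $H^{4k}(\mathrm{Out}\,F_n;\R)=0$ to absorb the remaining cocycle on $\mathrm{Out}\,F_n$; the $\mathrm{GL}(n,\Z)$-invariance argument is likewise the same conjugation-by-$g\in\mathrm{Out}\,F_n$ computation, reduced via the first part to the fact that inner automorphisms act trivially on cohomology. The only organizational difference is that you separate the two sources of ambiguity (in $b_{2k+1}$ and in $z_{4k}$) while the paper treats them together in a single step.
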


\begin{proof}
First we prove the former part of the claim.
Let $b'_{2k+1}\in Z^{4k+1}(\mathrm{GL}(n,\Z);\R)$ be another representative of 
the Borel class $\beta_{2k+1}$ and let
$$
z'_{4k}\in C^{4k}(\mathrm{Out}\, F_n;\R)
$$
be a cochain such that $\delta z'_{4k}= p^*b'_{2k+1}$.  Now there exists an
element $u\in C^{4k}(\mathrm{GL}(n,\Z);\R)$ such that
$$
\delta u=b'_{2k+1}-b_{2k+1}.
$$
Then we have
$$
\delta z'_{4k}=p^*b'_{2k+1}=p^*(b_{2k+1}+\delta u)=\delta(z_{4k}+p^*u).
$$
It follows that
$$
\delta(z'_{4k}-z_{4k}-p^*u)=0.
$$
By the vanishing theorem of Galatius, there exists an element $v\in C^{4k-1}(\mathrm{Out}\,F_n;\R)$ such that
$$
z'_{4k}-z_{4k}-p^*u=\delta v.
$$
Then we have
$$
i^* z'_{4k}=i^*z_{4k}+\delta i^* v
$$
because $i^*p^* u=0$. Hence
$$
[i^* z'_{4k}]=[i^*z_{4k}]\in H^{4k}(\mathrm{IOut}_n;\R)
$$
as required.

Next we prove the latter part claiming that this cohomology class is $\mathrm{GL}(n,\Z)$-invariant.
For this, it is enough to prove the following. Any element
$\varphi\in\mathrm{Out}\,F_n$ induces an automorphism 
$\iota_\varphi$ of $\mathrm{IOut}_n$
by the correspondence
$$
\mathrm{IOut}_n\ni\psi\mapsto \iota_\varphi(\psi)=\varphi \psi\varphi^{-1}\in \mathrm{IOut}_n.
$$
Then, under the induced automorphism
$$
\iota_\varphi^*: H^*(\mathrm{IOut}_n;\R)\cong H^*(\mathrm{IOut}_n;\R),
$$
the equality
$$
\iota_\varphi^*([i^* z_{4k}])=[i^* z_{4k}]
$$
holds. To prove this, observe first that the cohomology class $\iota_\varphi^*([i^* z_{4k}])$
is represented by the cocycle
$
i^*\iota_\varphi^*z_{4k}
$
which is the restriction to $\mathrm{IOut}_n$ of the cochain
$$
\iota_\varphi^*z_{4k}\in Z^{4k}(\mathrm{Out}\,F_n;\R).
$$
This cochain in turn satisfies the identity
$$
\delta (\iota_\varphi^*z_{4k})=\iota_\varphi^* p^*b_{2k+1}.
$$
If we denote by $\iota_{\bar{\varphi}}$
the inner automorphism of $\mathrm{GL}(n,\Z)$ induced by the projected element
$\bar{\varphi}=p(\varphi) \in \mathrm{GL}(n,\Z)$, then we have the
following commutative diagram
$$
\begin{CD}
  \mathrm{Out}\,F_n@>{\iota_\varphi}>>  \mathrm{Out}\,F_{n}\\
@ V{p}VV @V{p}VV\\
 \mathrm{GL}(n,\Z)@>{\iota_{\bar{\varphi}}}>>  \mathrm{GL}(n,\Z).
\end{CD}
$$
Then we have
$
\iota_\varphi^* p^*b_{2k+1}=p^* \iota^*_{\bar{\varphi}} b_{2k+1}
$
and hence
$$
\delta (\iota_\varphi^*z_{4k})=p^* \iota^*_{\bar{\varphi}} b_{2k+1}.
$$
Now, as is well known, any inner automorphism of any group induces the
identity on its (co)homology group. Therefore, the cochain $\iota^*_{\bar{\varphi}} b_{2k+1}$
of the group $\mathrm{GL}(n,\Z)$ is cohomologous to $b_{2k+1}$.
Hence, by replacing $b'_{2k+1}$ and $z'_{4k}$
with $\iota^*_{\bar{\varphi}} b_{2k+1}$ and $\iota_\varphi^*z_{4k}$ respectively, in the former argument
above, we can conclude that
$$
\iota_{\varphi}^*([i^*z_{4k}])=[i^* \iota_\varphi^*z_{4k}]=[i^*z_{4k}]
$$
as required. This completes the proof.
\end{proof}

\begin{definition}
$$
T\beta_{2k+1}=[i^*z_{4k}]\in H^{4k}(\mathrm{IOut}_n;\R)^{\mathrm{GL}(n,\Z)}
$$
$$
T\beta^0_{2k+1}=q^*[i^*z_{4k}]\in H^{4k}(\mathrm{IA}_n;\R)^{\mathrm{GL}(n,\Z)}
$$
where
$$
q: \mathrm{IA}_n\rightarrow \mathrm{IOut}_n
$$
denotes the natural projection. By the above construction, we see that our
secondary class $T\beta^0_{2k+1}$ is stable in the following sense. Namely,
if we denote by
$$
i: \mathrm{IA}_n\rightarrow \mathrm{IA}_{n+1}
$$
the natural inclusion in the stable range, then we have
$$
i^* T\beta^0_{2k+1}=T\beta^0_{2k+1}.
$$
It follows that we can define this class for {\it all} $n$ by just pulling back the above
stable class by the natural inclusion $\mathrm{IA}_n\subset \mathrm{IA}_{N}$
where $N$ is a large number.
\label{def:Tb}
\end{definition}

\begin{prop}
The class $T\beta^0_{2k+1}$ is primitive in the following sense. Namely, if we denote
by 
$$
\mu_0: \mathrm{IA}_n\times  \mathrm{IA}_{n'}\rightarrow \mathrm{IA}_{n+n'}
$$
the natural homomorphism, then we have
$$
\mu_0^*(T\beta^0_{2k+1})=T\beta^0_{2k+1}\otimes 1+1\otimes T\beta^0_{2k+1}.
$$
\label{prop:Tbetap}
\end{prop}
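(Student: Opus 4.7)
The plan is to lift the primitivity of the Borel regulator class (Proposition \ref{prop:betap}) to a cochain-level identity on $\mathrm{Aut}\,F_n \times \mathrm{Aut}\,F_{n'}$ and then restrict to $\mathrm{IA}_n \times \mathrm{IA}_{n'}$, where an unwanted correction term drops out. Galatius' vanishing theorem (Theorem \ref{thm:galatius}) is what allows the cochain-level bookkeeping to close up.

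First, fix cocycle representatives $b, b', b''$ of $\beta_{2k+1}$ on $\mathrm{GL}(n,\Z), \mathrm{GL}(n',\Z), \mathrm{GL}(n+n',\Z)$, and cochains $z, z', z''$ on $\mathrm{Aut}\,F_n, \mathrm{Aut}\,F_{n'}, \mathrm{Aut}\,F_{n+n'}$ (obtained from the $\mathrm{Out}$-level cochains defining $T\beta_{2k+1}$) satisfying $\delta z = p^* b$ and so on. Proposition \ref{prop:betap} implies that at the cochain level there exists a cochain $w$ on $\mathrm{GL}(n,\Z) \times \mathrm{GL}(n',\Z)$ with
$$
\bar\mu^* b'' \;=\; b \otimes 1 + 1 \otimes b' + \delta w.
$$

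Second, consider the commutative diagram with rows $\mathrm{IA} \to \mathrm{Aut}\,F \to \mathrm{GL}$ and columns given by $\mu_0, \mu, \bar\mu$. Pulling back $z''$ via $\mu$ and using the middle commutative square yields
$$
\delta\bigl(\mu^* z'' - z \otimes 1 - 1 \otimes z' - (p \times p)^* w\bigr) = 0
$$
on $\mathrm{Aut}\,F_n \times \mathrm{Aut}\,F_{n'}$. By Theorem \ref{thm:galatius} and the Künneth formula, $H^{4k}(\mathrm{Aut}\,F_n \times \mathrm{Aut}\,F_{n'};\R) = 0$ in the stable range, so this cocycle is a coboundary $\delta v$.

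Third, restrict the resulting equality $\mu^* z'' = z \otimes 1 + 1 \otimes z' + (p \times p)^* w + \delta v$ along $\mathrm{IA}_n \times \mathrm{IA}_{n'} \hookrightarrow \mathrm{Aut}\,F_n \times \mathrm{Aut}\,F_{n'}$. The key observation is that $(p \times p)^* w$ pulls back to zero on $\mathrm{IA}_n \times \mathrm{IA}_{n'}$ since $p$ is trivial on each $\mathrm{IA}$ factor. Hence the restriction gives, at the cochain level,
$$
\mu_0^*(j^* z'') \;=\; j^* z \otimes 1 + 1 \otimes j^* z' + \delta\bigl((j \times j)^* v\bigr),
$$
and passing to cohomology yields the claimed primitivity. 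For general $n, n'$ not in the stable range, apply the stable definition from Definition \ref{def:Tb} and observe that the inclusions intertwine $\mu_0$ with the stable $\mu_0$.

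The only real subtlety is the appearance of the correction term $(p \times p)^* w$, which a priori obstructs the cochain identity from being exactly additive; the essential point is that this obstruction lives in the image of $p \times p$ and therefore disappears precisely when one restricts to the $\mathrm{IA}$ subgroups, which is exactly where $T\beta^0_{2k+1}$ is defined. Galatius' theorem supplies the cochain $v$ needed to kill the remaining cocycle ambiguity.
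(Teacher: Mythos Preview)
Your proof is correct and follows essentially the same approach as the paper: lift the primitivity of $\beta_{2k+1}$ to a cochain identity on $\mathrm{GL}\times\mathrm{GL}$, pull back to $\mathrm{Aut}\,F_n\times\mathrm{Aut}\,F_{n'}$, use Galatius' vanishing (with K\"unneth) to kill the resulting cocycle, and then observe that the correction term $(p\times p)^*w$ dies upon restriction to $\mathrm{IA}_n\times\mathrm{IA}_{n'}$. The only cosmetic differences are that the paper uses a single stable cocycle $b_{2k+1}$ rather than three separate ones, and denotes the inclusion $\mathrm{IA}\hookrightarrow\mathrm{Aut}$ by $i$ rather than $j$.
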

\begin{proof}
Consider the following commutative diagram
\begin{equation}
\begin{CD}
  \mathrm{IA}_n\times   \mathrm{IA}_{n'}@>{\mu_{0}}>>  \mathrm{IA}_{n+n'}\\
  @ V{i\times i}VV @V{i}VV\\
  \mathrm{Aut}\,F_n\times   \mathrm{Aut}\,F_{n'}@>{\mu}>>  \mathrm{Aut}\,F_{n+n'}\\
@ V{p\times p}VV @V{p}VV\\
 \mathrm{GL}(n,\Z)\times   \mathrm{GL}(n',\Z)@>{\bar{\mu}}>>  \mathrm{GL}(n+n',\Z).
\end{CD}
\label{eq:cd}
\end{equation}
By Proposition \ref{prop:betap}, we have
$$
\bar{\mu}^* \beta_{2k+1}=\beta_{2k+1}\otimes 1+1\otimes \beta_{2k+1}.
$$
It follows that there exists a cochain $d \in C^{4k-1}(\mathrm{GL}(n
,\Z)\times   \mathrm{GL}(n',\Z);\R)$ such that
$$
\bar{\mu}^* b_{2k+1}=b_{2k+1}\times 1+1\times b_{2k+1}+\delta d.
$$
Then we have
\begin{align*}
(p\times p)^*\bar{\mu}^* b_{2k+1}&=\delta z_{4k}\times 1 + 1\times \delta z_{4k}+(p\times p)^*\delta d\\
&=\delta\left(z_{4k}\times 1+1\times z_{4k}+(p\times p)^* d\right).
\end{align*}
Now, by the definition of $T\beta_{2k+1}$
$$
p^* b_{2k+1}=\delta z_{4k}\ \text{and}\ T\beta_{2k+1}=[i^* z_{4k}]
$$
so that
$$
\mu^* p^* b_{2k+1}=\delta \mu^*z_{4k}.
$$
Since $p\circ \mu=\bar{\mu}\circ p\times p$, we can conclude
$$
\delta\left(z_{4k}\times 1+1\times z_{4k}+(p\times p)^* d\right)=\delta \mu^*z_{4k}
$$
and hence
$$
\delta(\mu^*z_{4k}-z_{4k}\times 1-1\times z_{4k}-(p\times p)^* d)=0.
$$
Thus, the element in the parenthesis above is a cocycle of the group
$\mathrm{Aut}\,F_n\times   \mathrm{Aut}\,F_{n'}$.
In the stable range, where $n$ and $n'$ are sufficiently large
$$
H^{4k}(\mathrm{Aut}\,F_n\times   \mathrm{Aut}\,F_{n'};\R)=0
$$
by the vanishing theorem of Galatius. Therefore, there exists an
element 
$$
d'\in C^{4k-1}(\mathrm{Aut}\,F_n\times   \mathrm{Aut}\,F_{n'};\R)
$$
such that
$$
\mu^*z_{4k}-z_{4k}\times 1-1\times z_{4k}-(p\times p)^* d=\delta d'.
$$
Hence
$$
\mu_{0}^*i^* z_{4k}=(i\times i)^* \mu^*z_{4k}=i^* z_{4k}\times 1+1\times i^*z_{4k}+\delta (i\times i)^* d'.
$$
We now conclude that
$$
\mu_{0}^*[i^* z_{4k}]=[i^* z_{4k}]\otimes 1+1\otimes [i^*z_{4k}].
$$
This competes the proof.
\end{proof}

\begin{remark}
The above proposition is a corollary of the following theorem and 
general property of the higher $\mathrm{FR}$ torsion (see Theorem 5.7.5 of Igusa \cite{i}).
However, for completeness, we gave a proof in the framework of this paper.
In the next section,  it will be extended to the case of mapping class group (Proposition \ref{prop:hbetap}).
See also Remark \ref{rem:bhat}.
\end{remark}

\begin{thm}
The secondary class $T\beta_{2k+1}$ is equal to 
Igusa's higher FR torsion class
$\tau_{2k}(\mathrm{IOut}_{n})$.
\label{thm:Tb}
\end{thm}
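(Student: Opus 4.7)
The plan is to show that both $T\beta_{2k+1}$ and Igusa's higher FR torsion $\tau_{2k}(\mathrm{IOut}_n)$ arise from the same transgression mechanism in the extension
$$
1 \to \mathrm{IOut}_n \to \mathrm{Out}\,F_n \to \mathrm{GL}(n,\Z) \to 1,
$$
and then to match them at the level of cohomology. The construction of $T\beta_{2k+1}$ is manifestly the transgression: it takes the Borel class $\beta_{2k+1}$ on the quotient, uses the vanishing of its pullback to $\mathrm{Out}\,F_n$ (Igusa--Galatius) to choose a primitive $z_{4k}$, and restricts to the fiber $\mathrm{IOut}_n$. By Proposition \ref{prop:binv}, this yields a $\mathrm{GL}(n,\Z)$-invariant class, hence a well-defined element of the $E_\infty$-term of the Lyndon--Hochschild--Serre spectral sequence. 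This is exactly the $d_{4k+1}$-transgression of $\beta_{2k+1}$.

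Next, I would invoke Igusa's description of $\tau_{2k}(\mathrm{IOut}_n)$. Igusa defines the higher FR torsion for $\mathrm{IOut}_n$ via the parametrized Borel regulator applied to the trivial $F_n$-bundle over $B\mathrm{IOut}_n$, using that $\mathrm{IOut}_n$ acts trivially on $H_1(F_n;\Z)$. The key identification comes from Igusa's theorem (the relevant statement is in \cite{i}, in the sections on FR torsion and its relation to Borel regulators) which expresses $\tau_{2k}(\mathrm{IOut}_n)$ precisely as the transgression of the Borel class $\beta_{2k+1}$ under the extension above. Citing this result reduces the theorem to a comparison of two explicit transgression constructions.

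The remaining step is cochain-level matching. After choosing compatible models for the classifying spaces and a cocycle representative $b_{2k+1}$ of $\beta_{2k+1}$, I would verify that Igusa's construction produces, up to coboundary, the same cochain $z_{4k} \in C^{4k}(\mathrm{Out}\,F_n;\R)$ satisfying $\delta z_{4k} = p^*b_{2k+1}$, and that the ambiguity in $z_{4k}$ is absorbed by the ambiguity in Igusa's construction (both are controlled by $H^{4k}(\mathrm{Out}\,F_n;\R)$, which vanishes stably by Galatius). The $\mathrm{GL}(n,\Z)$-invariance established in Proposition \ref{prop:binv} then gives agreement as elements of $H^{4k}(\mathrm{IOut}_n;\R)^{\mathrm{GL}(n,\Z)}$.

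The main obstacle is the cochain-level bookkeeping in the last step: one must check that the normalization constants implicit in Igusa's parametrized-torsion definition agree with the normalization of $\beta_{2k+1}$ used here, and that the choice of primitive on $\mathrm{Out}\,F_n$ does not introduce discrepancies once restricted to $\mathrm{IOut}_n$. Both issues are essentially formal once the framework is set up, but they require care because Igusa's definition is homotopy-theoretic (via the Whitehead space and the Becker--Gottlieb transfer) whereas the construction here is purely cohomological. The bridge is the vanishing theorem of Galatius, which ensures that any two primitives of $p^*b_{2k+1}$ differ by a coboundary in the stable range.
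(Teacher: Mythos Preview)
Your overall strategy is correct and matches the paper's: both $T\beta_{2k+1}$ and $\tau_{2k}(\mathrm{IOut}_n)$ are identified with the transgression of $\beta_{2k+1}$ in the relevant fibration, and Galatius' vanishing theorem is the engine that makes the transgression well-defined. So at the conceptual level you have the right argument.

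The implementation differs, however, and your version leaves a soft spot. You write that you would ``invoke Igusa's theorem \ldots\ which expresses $\tau_{2k}(\mathrm{IOut}_n)$ precisely as the transgression of the Borel class $\beta_{2k+1}$,'' and then reduce to cochain-level bookkeeping. But Igusa does not package his torsion in quite that form; the paper instead extracts this identification directly from the homotopy-commutative diagram of Proposition~8.5.6 in \cite{i}, with the Whitehead space $|\mathcal{W}h^h_{\cdot}(\Z,1)|$ sitting in a fibration sequence over $\Z\times\mathrm{BOut}_\infty^+\simeq QS^0$ and $\Z\times\mathrm{BGL}(\infty,\Z)^+$. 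The crucial observation is that, because each component of $QS^0$ is rationally contractible, the map $\bar{i}_0\colon \Omega\mathrm{BGL}(\infty,\Z)^+ \to |\mathcal{W}h^h_{\cdot}(\Z,1)|$ is a rational homotopy equivalence. Igusa's universal torsion $\tau_{2k}$ is built from an explicit cocycle on the Volodin model for the loop space corresponding to $\beta_{2k+1}$, and the equality then follows by comparing the Serre spectral sequence of the fibration on the right column with that of the path fibration over $\mathrm{BGL}(\infty,\Z)^+$. No cochain-level matching is needed: the comparison happens entirely at the level of spectral sequences of fibrations, which sidesteps your normalization and primitive-choice worries in one stroke.

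In short: drop the cochain bookkeeping and instead trace through Igusa's diagram with Galatius' equivalence inserted; the spectral-sequence comparison is cleaner and is what actually pins down the identification rather than assuming it.
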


\begin{proof}
Proof is given by putting the vanishing theorem of Galatius 
in Igusa's theory of higher $\mathrm{FR}$ torsions developed in \cite{i}.
More precisely, let us consider the following homotopy commutative diagram
$$
\begin{CD}
{} @. {}   \Omega\mathrm{BGL}(\infty,\Z)^+\\
  @.  @V{\bar{i}_0}VV \\
  \mathrm{BIOut}_n @>{f_0}>>   |\mathcal{W}h^h_{\cdot}(\Z,1)|\\\
  @ V{\mathrm{B}i}VV @V{\bar{i}}VV \\
 \mathrm{BOut}\,F_n@>{f}>>   \Z\times\mathrm{BOut}^+_\infty\overset{h.e.}{\cong}QS^0\\
   @ V{\mathrm{B}p}VV @V{\bar{p}}VV \\
 \mathrm{BGL}(n,\Z)@>{\bar{f}}>>   \Z\times \mathrm{BGL}(\infty,\Z)^+
 \end{CD}
$$
described in Proposition 8.5.6 of the above cited book.
The point here is that we can put Galatius'
result
$$
\Z\times\mathrm{BOut}^+_\infty\overset{h.e.}{\cong}QS^0
$$
(see Theorem \ref{thm:galatius}) in the third place from the top of the right column.
Each of the two successive three spaces appearing in the right column is a fibration sequence
and each connected component of $QS^0$ is rationally trivial.
Hence the map $\bar{i}_0$ on the right column is a rational homotopy equivalence.
Now Igusa's higher torsion 
is defined roughly as follows. He constructs an explicit $4k$-cocycle of the Volodin space
$V(\Z)$, which is homotopy equivalent to $\Omega\mathrm{BGL}(\infty,\Z)^+$,
such that its cohomology class in $H^{4k}(\Omega\mathrm{BGL}(\infty,\Z)^+;\R)$
corresponds to the Borel class $\beta_{2k+1}$.
Since $\bar{i}_0$ is a rational homotopy equivalence as above, this cohomology class induces the
{\it universal} higher $\mathrm{FR}$ torsion class
$$
\tau_{2k}\in H^{4k}(|\mathcal{W}h^h_{\cdot}(\Z,1)|;\R).
$$
Then his torsion $\tau_{2k}(\mathrm{IOut}_n)\in H^{4k}(\mathrm{IOut}_n;\R)$ is defined to be
the image under $f_0^*$ of the above universal class. Our claim now follows by simply 
comparing the spectral sequence for the rational cohomology groups of the fibration
given by the lower three terms of the right column with that of the path fibration over 
$\mathrm{BGL}(\infty,\Z)^+$.
\end{proof}

\begin{problem}
Construct a cochain $z_{4k}\in C^{4k}(\mathrm{Out}\,F_n;\R)$ such that
$\delta z_{4k}=p^*b^H_{2k+1}$ explicitly. Here $b^H_{2k+1}$ denotes Hamida's cocycle
given in \cite{hamida} which
represents the Borel class $\beta_{2k+1}$.
We can also consider another cocycle for $\beta_{2k+1}$ along the line
of Dupont \cite{d}.
\end{problem}

\section{Secondary characteristic classes for the mapping class group}\label{sec:scm}

In this section, we define secondary classes for the mapping class group by
comparing two different ways of vanishing of 
the Borel regulator classes $\beta_{2k+1}$, one on the automorphism group of free groups and
the other on the Siegel modular group. 
We show that they are non-zero multiples of
the $\mathrm{MMM}$ classes $e_{2k}$ of {\it even} indices
by relating them to Igusa's higher torsions for the mapping class groups.
This would give a new geometric meaning to these classes
from the viewpoint of the theory of cohomology of groups.

We mention here that, about differences between $e_{odd}$ 
and $e_{even}$ classes, 
there are several interesting results. 
Church, Farb and Thibault \cite{cft} proved that the former classes
have some nice geometric property which the latter classes do not. On the other hand, 
Giansiracusa and Tillmann \cite{gt} and Sakasai \cite{sakasai12}
proved that the former classes vanish on the handlebody subgroup and
Lagrangian subgroup of the mapping class group, respectively.
More strongly, 
Hatcher \cite{hatcher} proved that the stable rational cohomology
of the handlebody subgroup 
is the polynomial algebra
generated by $e_{even}$  classes.
Also it was shown in \cite{morita99} that 
these classes represent the 
{\it orbifold} Pontrjagin classes
of the moduli space of curves.

Now let $\mathcal{M}_{g,1}$ denote the mapping class group of a compact oriented genus $g$ surface
with one boundary component as before
and let
$$
i: \mathcal{M}_{g,1}\rightarrow \mathrm{Aut}\, F_{2g}
$$
be the inclusion given by the classical theorem of Dehn-Nielsen-Zieschang.
It induces a related inclusion
$$
i: \mathcal{M}_{g,*}\rightarrow \mathrm{Out}\, F_{2g}
$$
and we have the following commutative diagrams.
$$
\begin{CD}
 \mathcal{I}_{g,*} @>{j_{0}}>> \mathrm{IOut}_{2g}@. \quad\quad   \mathcal{M}_{g,1} @>{\tilde{j}}>>  \mathrm{Aut}\,F_{2g} \\
 @V{i_0}VV @V{i}VV \quad\quad  @V{q}VV @V{\bar{q}}VV\\
 \mathcal{M}_{g,*} @>{j}>> \mathrm{Out}\,F_{2g}@. \quad\quad  \mathcal{M}_{g,*} @>{j}>> \mathrm{Out}\,F_{2g}\\
 @V{p_0}VV @V{i}VV \quad\quad  @V{p_0}VV @V{p}VV\\
 \mathrm{Sp}(2g,\Z) @>{\bar{j}}>> \mathrm{GL}(2g,\Z),@.  \quad\quad  \mathrm{Sp}(2g,\Z) @>{\bar{j}}>> \mathrm{GL}(2g,\Z).\\
\end{CD}
$$

As in the previous section, there exists a cochain $z_{4k}\in C^{4k}(\mathrm{Out}\, F_{2g};\R)$ such that
$$
p^*b_{2k+1}=\delta z_{4k}.
$$
On the other hand, the stable cohomology of $\mathrm{Sp}(2g,\Z)$ is a polynomial algebra
on $c_1,c_3,\ldots$ (see Theorem \ref{thm:borel}) so that there are no cohomology classes of odd degrees. Hence
there exists a cochain $y_{4k}\in C^{4k}(\mathrm{Sp}(2g,\Z);\R)$ such that
$$
\bar{j}^*(b_{2k+1})=\delta y_{4k}.
$$
Then we have
$$
0=j^*(p^*b_{2k+1})-p_0^*(\bar{j}^* b_{2k+1})=\delta (j^* z_{4k}-p_0^* y_{4k})
$$
so that 
$$
j^* z_{4k}-p_0^* y_{4k}\in Z^{4k}(\mathcal{M}_{g,*};\R).
$$
In the stable range, the cohomology class represented by this cocycle
can be written as
$$
[j^* z_{4k}-p_0^* y_{4k}]=f(e_1,\ldots,e_{2k})+e g(e,e_1,\ldots,e_{2k-1})\in H^{4k}(\mathcal{M}_{g,*};\R)
$$
by Theorem \ref{thm:looijenga}.
Also, we have
$$
q^*[j^* z_{4k}-p_0^* y_{4k}]=f(e_1,\ldots,e_{2k})\in H^{4k}(\mathcal{M}_{g,1};\R)
$$
where
$$
q: \mathcal{M}_{g,1}\rightarrow \mathcal{M}_{g,*}
$$
denotes the natural projection.

Now it was proved in \cite{morita87}\cite{mumford} that the homomorphism
$$
p_0^*: H^*(\mathrm{Sp}(2g,\Z);\Q)\rightarrow H^*(\mathcal{M}_{g,*};\Q)
$$
is injective, in a certain stable range, and its image is equal to the
subalgebra generated by the $\mathrm{MMM}$ classes of {\it odd} indices.
In fact, the pull back of the Chern classes are those of the Hodge bundle over the
moduli space and the totality of them is the same as that of
the $\mathrm{MMM}$ classes of {\it odd} indices.
Hence, by adding suitable cocycles in $Z^{4k}(\mathrm{Sp}(2g,\Z);\R)$,
we may assume that the polynomial $f(e_1,\ldots,e_{2k})$ does not 
contain monomials of $e_{odd}$ classes.

\begin{prop}
The cohomology class $[j^* z_{4k}-p_0^* y_{4k}]\in H^{4k}(\mathcal{M}_{g,*};\R)$ 
is well-defined independent of the choices of $b_{2k+1}$,  $z_{4k}$ and $y_{4k}$.
\end{prop}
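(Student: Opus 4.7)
The plan is to factor the ambiguity in the triple $(b_{2k+1}, z_{4k}, y_{4k})$ into three independent sources and dispose of each in turn, with the last piece neutralized by the normalization of $y_{4k}$ introduced just above the statement.

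First I would dispose of the dependence on $b_{2k+1}$. Given two representatives of $\beta_{2k+1}$ with $b'_{2k+1} - b_{2k+1} = \delta u$ for some $u \in C^{4k}(\mathrm{GL}(2g,\Z);\R)$, the cochains $z_{4k} + p^*u$ and $y_{4k} + \bar{j}^*u$ are admissible primary cochains for $b'_{2k+1}$. The commutativity $p \circ j = \bar{j} \circ p_0$ of the bottom square of the right-hand diagram above forces $j^* p^* u = p_0^* \bar{j}^* u$, so passing from $(b_{2k+1}, z_{4k}, y_{4k})$ to $(b'_{2k+1}, z_{4k} + p^*u, y_{4k} + \bar{j}^*u)$ leaves the cochain $j^* z_{4k} - p_0^* y_{4k}$ unchanged on the nose. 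This reduces the task to fixing $b_{2k+1}$ once and for all and comparing two admissible choices of $z_{4k}$ and $y_{4k}$.

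Second, I would observe that for fixed $b_{2k+1}$ two choices $z_{4k}, z'_{4k}$ differ by a cocycle on $\mathrm{Out}\,F_{2g}$ which, in the stable range, is a coboundary by Galatius' vanishing theorem (Theorem \ref{thm:galatius}); applying $j^*$ produces a coboundary on $\mathcal{M}_{g,*}$, so this ambiguity contributes nothing in cohomology. This is the direct analogue of the argument already used in Proposition \ref{prop:binv} for the $\mathrm{IOut}$ case.

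The main obstacle will be the ambiguity in $y_{4k}$. For fixed $b_{2k+1}$, two choices differ by a cocycle on $\mathrm{Sp}(2g,\Z)$ whose cohomology class, by Theorem \ref{thm:borel}, is stably a nontrivial polynomial in the Chern classes $c_1, c_3, \ldots$. Under $p_0^*$ this polynomial becomes a polynomial in the odd-indexed $\mathrm{MMM}$ classes, so the ambiguity of $[j^* z_{4k} - p_0^* y_{4k}]$ is a priori confined to the subalgebra $\R[e_1, e_3, \ldots]$ inside $H^{4k}(\mathcal{M}_{g,*};\R)$. This is exactly where the normalization introduced just above the statement intervenes: once $y_{4k}$ is chosen so that the polynomial $f$ in the decomposition $f(e_1,\ldots,e_{2k}) + e\,g(e,e_1,\ldots,e_{2k-1})$ contains no pure $e_{odd}$ monomial, two such normalized classes must differ by an element that lies in $\R[e_1,e_3,\ldots]$ and simultaneously has no pure $e_{odd}$ monomial, which forces the difference to vanish. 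This yields the claimed well-definedness.
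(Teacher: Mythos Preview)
Your argument is correct and follows essentially the same route as the paper's proof: both use Galatius' vanishing to absorb the $z_{4k}$-ambiguity and then invoke the normalization on pure $e_{odd}$ monomials together with the fact that $p_0^*$ lands in $\R[e_1,e_3,\ldots]$ to kill the $y_{4k}$-ambiguity. The only difference is organizational---you factor the indeterminacy into three successive moves (change $b$, then $z$, then $y$), whereas the paper handles all three at once by writing $b'_{2k+1}=b_{2k+1}+\delta u$, $z'_{4k}=z_{4k}+p^*u+\delta v$, $y'_{4k}=y_{4k}+\bar j^*u+w$ and computing the difference directly; the content is the same.
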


\begin{proof}
Let $b'_{2k+1}\in Z^{4k+1}(\mathrm{GL}(2g,\Z);\R)$ be another representative of 
the Borel class $\beta_{2k+1}$ and let
$$
z'_{4k}\in C^{4k}(\mathrm{Out}\, F_{2g};\R),\quad y'_{4k}\in C^{4k}(\mathrm{Sp}(2g,\Z);\R)
$$
be cochains such that $\delta z'_{4k}= p^*b'_{2k+1}$ and $\delta y'_{4k}=\bar{j}^* b'_{2k+1}$.  
We further assume that, in the expression of the cohomology class
$$
[j^*z'_{4k}-p_0^*y'_{4k}]=f'(e_1,\ldots,e_{2k})+e g'(e,e_1,\ldots,e_{2k-1})\in H^{4k}(\mathcal{M}_{g,*};\R),
$$
the polynomial $f'(e_1,\ldots,e_{2k})$ does not contain any monomial of $e_{odd}$ classes.

Then as in the proof of Proposition \ref{prop:binv},
there exist elements $u\in C^{4k}(\mathrm{GL}(2g,\Z);\R)$ 
and $v\in C^{4k-1}(\mathrm{Out}\,F_{2g};\R)$
such that
$$
b'_{2k+1}=b_{2k+1}+\delta u,\quad
z'_{4k}=z_{4k}+p^*u+\delta v.
$$
Hence
$$
\delta y'_{4k}=\bar{j}^* b'_{2k+1}=\bar{j}^*(b_{2k+1}+\delta u)
=\delta (y_{4k}+\bar{j}^* u).
$$
Therefore, if we set
$$
w=y'_{4k}-y_{4k}-\bar{j}^* u,
$$
then $w$ is a cocycle of the group $\mathrm{Sp}(2g,\Z)$ so that we can consider its
cohomology class
$$
[w]\in H^{4k}(\mathrm{Sp}(2g,\Z);\R).
$$
Also
$$
y'_{4k}=y_{4k}+\bar{j}^* u+w.
$$
Now we have
\begin{align*}
j^*z'_{4k}-p_0^*y'_{4k}&=j^*(z_{4k}+p^*u+\delta v)-p_0^*(y_{4k}+\bar{j}^* u+w)\\
&=(j^*z_{4k}-p_0^*y_{4k})+\delta j^* v-p_0^* w
\end{align*}
because $j^*p^*u-p_0^*\bar{j}^* u=0$. It follows that
$$
[j^*z'_{4k}-p_0^*y'_{4k}]=[j^*z_{4k}-p_0^*y_{4k}]-p_0^* [w]\in H^{4k}(\mathcal{M}_{g,*};\R).
$$
By the definition of our secondary class $\hat{\beta}_{2k+1}$, both the cohomology classes
$[j^*z'_{4k}-p_0^*y'_{4k}]$ and $[j^*z_{4k}-p_0^*y_{4k}]$ do not contain any
monomial of $e_{odd}$ classes. On the other hand, as mentioned already above,
the cohomology class $p_0^* [w]$ is a linear combination of such monomials.
Hence we conclude that $p_0^* [w]=0$ and so
$$
[j^*z'_{4k}-p_0^*y'_{4k}]=[j^*z_{4k}-p_0^*y_{4k}]\in H^{4k}(\mathcal{M}_{g,*};\R)
$$
as required. This completes the proof.
\end{proof}

Based on the above discussion, we make the following definition.
\begin{definition}
$$
\hat{\beta}_{2k+1}=[j^* z_{4k}-p_0^* y_{4k}]\in H^{4k}(\mathcal{M}_{g,*};\R),
$$
$$
\hat{\beta}^0_{2k+1}=q^*(\hat{\beta}_{2k+1})\in H^{4k}(\mathcal{M}_{g,1};\R).
$$
By the above construction, we see that our
secondary class $\hat{\beta}^0_{2k+1}$ is stable in the following sense. Namely,
if we denote by
$$
i: \mathcal{M}_{g,1}\rightarrow \mathcal{M}_{g+1,1}
$$
the natural inclusion in the stable range, then we have
$$
i^* \hat{\beta}^0_{2k+1}=\hat{\beta}^0_{2k+1}.
$$
It follows that we can define this class for {\it all} $g$ by just pulling back the above
stable class by the natural inclusion $\mathcal{M}_{g,1}\subset \mathcal{M}_{G,1}$ where
$G$ is a large number.
\label{def:hb}
\end{definition}

\begin{problem}
Construct a cochain $y_{4k}\in C^{4k}(\mathrm{Sp}(2g,\Z);\R)$ such that
$\delta y_{4k}=j^*b^H_{2k+1}$ explicitly, where $b^H_{2k+1}$ denotes Hamida's cocycle
as before.
\end{problem}

\begin{remark}
Let us define subgroups $\mathrm{Aut}^\mathrm{sp}F_{2g}\subset \mathrm{Aut}\,F_{2g}$ and
$\mathrm{Out}^\mathrm{sp}F_{2g}\subset \mathrm{Out}\,F_{2g}$ by setting
\begin{align*}
\mathrm{Aut}^\mathrm{sp}F_{2g}&=\{\varphi\in \mathrm{Aut}\,F_{2g}; \varphi_*\in \mathrm{Sp}(2g,\Z)\},\\
\mathrm{Out}^\mathrm{sp}F_{2g}&=\{\varphi\in \mathrm{Out}\,F_{2g}; \varphi_*\in \mathrm{Sp}(2g,\Z)\}.
\end{align*}
These subgroups were already defined by Igusa in \cite{i}
and they are strictly larger than $\mathcal{M}_{g,1}$ and $\mathcal{M}_{g,*}$.
In fact, we have the following exact sequences
\begin{align*}
1\rightarrow \mathrm{IA}_{2g}\rightarrow &\mathrm{Aut}^\mathrm{sp}F_{2g}\rightarrow \mathrm{Sp}(2g,\Z)\rightarrow 1, \\
1\rightarrow \mathrm{IOut}_{2g}\rightarrow &\mathrm{Out}^\mathrm{sp}F_{2g}\rightarrow \mathrm{Sp}(2g,\Z)\rightarrow 1.
\end{align*}
Then our secondary characteristic classes are defined as cohomology classes of these groups
so that we can write
\begin{align*}
\hat{\beta}^0_{2k+1}&\in H^{4k}(\mathrm{Aut}^\mathrm{sp}F_{2g};\R),\\
\hat{\beta}_{2k+1}&\in H^{4k}(\mathrm{Out}^\mathrm{sp}F_{2g};\R).
\end{align*}
\end{remark}

There is a close relationship between our secondary classes $T\beta_{2k+1}, T\beta^0_{2k+1}$
and $\hat{\beta}_{2k+1},$ $\hat{\beta}^0_{2k+1}$. More precisely, we have the following result.
Let $\mathcal{I}_{g,*}\subset\mathcal{M}_{g,*}$ and $\mathcal{I}_{g,1}\subset\mathcal{M}_{g,1}$
be the Torelli subgroups of the mapping class groups. Then we have the following commutative
diagrams
$$
\begin{CD}
 \mathcal{I}_{g,*} @>{j_{0}}>> \mathrm{IOut}_{2g}@. \quad\quad  \mathcal{I}_{g,1} @>{j_{0}}>> \mathrm{IA}_{2g}\\
 @V{i_0}VV @V{i}VV \quad\quad  @V{i_0}VV @V{i}VV\\
 \mathcal{M}_{g,*} @>{j}>> \mathrm{Out}\,F_{2g},@.  \quad\quad \mathcal{M}_{g,1} @>{j}>> \mathrm{Aut}\,F_{2g}.\\
\end{CD}
$$

\begin{prop}
We have the following identities.
\begin{align*}
j_0^*(T\beta_{2k+1})&=i_0^*(\hat{\beta}_{2k+1}),\\
j_0^*(T\beta^0_{2k+1})&=i_0^*(\hat{\beta}^0_{2k+1}).
\end{align*}
\label{prop:IAM}
\end{prop}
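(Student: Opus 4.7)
The plan is a short diagram chase at the cochain level, hinging on the fact that $p_0 \circ i_0 : \mathcal{I}_{g,*} \to \mathrm{Sp}(2g,\Z)$ is the trivial homomorphism (since by definition $\mathcal{I}_{g,*} = \ker p_0$). Using the commutativity $i \circ j_0 = j \circ i_0$ from the left square,
$$
j_0^*(T\beta_{2k+1}) = j_0^*\bigl[i^* z_{4k}\bigr] = \bigl[(j\circ i_0)^* z_{4k}\bigr] = \bigl[i_0^* j^* z_{4k}\bigr],
$$
while directly from Definition \ref{def:hb},
$$
i_0^*(\hat{\beta}_{2k+1}) = \bigl[i_0^* j^* z_{4k} - i_0^* p_0^* y_{4k}\bigr].
$$
So the first identity reduces to showing that $[(p_0\circ i_0)^* y_{4k}] = 0$ in $H^{4k}(\mathcal{I}_{g,*};\R)$.

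Since $p_0 \circ i_0$ is trivial, the cochain $(p_0\circ i_0)^* y_{4k}$ is constant on $\mathcal{I}_{g,*}$ with value $y_{4k}(e,\ldots,e)$. Its coboundary equals $(\bar{j}\circ p_0\circ i_0)^* b_{2k+1}$, the constant cochain with value $b_{2k+1}(e,\ldots,e)$; this value vanishes because any odd-degree group cocycle (with trivial coefficients) is automatically zero on the identity tuple. Hence $(p_0\circ i_0)^* y_{4k}$ is itself a cocycle, and being a constant cochain in positive even degree $4k$ it is the coboundary of the constant cochain with the same value in degree $4k-1$, so its class is zero. This completes the first identity.

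The second identity then follows by naturality. The projections $q : \mathrm{IA}_{2g}\to\mathrm{IOut}_{2g}$, $q : \mathcal{M}_{g,1}\to\mathcal{M}_{g,*}$, and $q : \mathcal{I}_{g,1}\to\mathcal{I}_{g,*}$ form compatible commutative squares with $j_0$ and $i_0$, and by definition $T\beta^0_{2k+1} = q^*(T\beta_{2k+1})$ and $\hat{\beta}^0_{2k+1} = q^*(\hat{\beta}_{2k+1})$; applying $q^*$ to the first identity and commuting it past $j_0^*$ and $i_0^*$ yields $j_0^*(T\beta^0_{2k+1}) = i_0^*(\hat{\beta}^0_{2k+1})$. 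There is no genuine obstacle in the argument; the one point that requires care is the vanishing of $[(p_0\circ i_0)^* y_{4k}]$, which cannot be obtained by simply invoking functoriality of cohomology (because $y_{4k}$ itself is not a cocycle) and so must be handled at the cochain level as above.
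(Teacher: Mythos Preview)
Your argument is correct and follows the same route as the paper's. The paper dispatches the term $i_0^* p_0^* y_{4k}$ in one line by writing ``$i_0^* p_0^* = 0$'', which is literally true in the normalized bar complex since $p_0 \circ i_0$ is the trivial homomorphism; you instead work in the unnormalized complex and verify explicitly that the resulting constant cochain is a coboundary, which is a harmless extra step and your computation is correct.
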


\begin{proof}
It is enough to prove the first identity because the second one follows from it.
By the definition, we have
$$
\hat{\beta}_{2k+1}=[j^* z_{4k}-p_0^* y_{4k}].
$$
Hence
$$
i_0^*(\hat{\beta}_{2k+1})=[i_0^*j^*z_{4k}]
$$
because $i_0^*p_0^*=0$. On the other hand
$$
T\beta_{2k+1}=[i^*z_{4k}]
$$
so that
$$
j_0^*(T\beta_{2k+1})=[j_0^*i^* z_{4k}]=[i_0^*j^*z_{4k}]=i_0^*(\hat{\beta}_{2k+1})
$$
completing the proof.
\end{proof}

\begin{prop}
The class $\hat{\beta}^0_{2k+1}$ is primitive in the sense that the equality
$$
\mu_0^*(\hat{\beta}^0_{2k+1})=\hat{\beta}^0_{2k+1}\otimes 1+1\otimes \hat{\beta}^0_{2k+1}
$$
holds, where $\mu_0$ denotes the following natural mapping
$$
\mu_0: \mathcal{M}_{g,1}\times  \mathcal{M}_{g',1}\rightarrow \mathcal{M}_{g+g',1}.
$$
\label{prop:hbetap}
\end{prop}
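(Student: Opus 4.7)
The plan is to run the argument of Proposition \ref{prop:Tbetap} at the Aut level, while carefully tracking the contribution of the additional cochain $p_0^* y_{4k}$ in the definition of $\hat{\beta}^0_{2k+1}$. Lifting to $\mathrm{Aut}\, F_{2g}$, a cocycle representative of $\hat{\beta}^0_{2k+1}$ on $\mathcal{M}_{g,1}$ is
$$\alpha^0_g := \tilde{j}^* \bar{q}^* z_{4k} - (p_0\circ q)^* y_{4k}.$$
Denote by $\bar{\mu}_s$ the natural block-diagonal inclusion $\mathrm{Sp}(2g,\Z)\times \mathrm{Sp}(2g',\Z)\to \mathrm{Sp}(2(g{+}g'),\Z)$; by the naturality of the action on the first homology, $(p_0\circ q)\circ \mu_0 = \bar{\mu}_s\circ((p_0\circ q)\times(p_0\circ q))$, while $\tilde{j}\circ\mu_0 = \mu\circ(\tilde{j}\times\tilde{j})$.

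First I would, exactly as in the proof of Proposition \ref{prop:Tbetap}, use Proposition \ref{prop:betap} together with Galatius' vanishing theorem in the stable range to produce cochains $d\in C^{4k-1}(\mathrm{GL}(2g,\Z)\times \mathrm{GL}(2g',\Z);\R)$ and $d'\in C^{4k-1}(\mathrm{Aut}\, F_{2g}\times \mathrm{Aut}\, F_{2g'};\R)$ measuring the failure of $b_{2k+1}$ and of $\bar{q}^*z_{4k}$ to be additive. Since $\delta y_{4k}=\bar{j}^*b_{2k+1}$, the element
$$w := \bar{\mu}_s^* y_{4k} - y_{4k}\times 1 - 1\times y_{4k} - (\bar{j}\times \bar{j})^* d$$
is automatically a cocycle on $\mathrm{Sp}(2g,\Z)\times \mathrm{Sp}(2g',\Z)$. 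A short computation, in which the factorization of the projection to $\mathrm{GL}$ through either $\mathrm{Aut}$ or $\mathrm{Sp}$ is used to cancel the two appearances of $d$, then gives
$$\mu_0^*\alpha^0_{g+g'} - \alpha^0_g\times 1 - 1\times \alpha^0_{g'} = \delta(\tilde{j}\times \tilde{j})^* d' - ((p_0\circ q)\times (p_0\circ q))^* w.$$
Therefore the primitivity defect $C:=\mu_0^*\hat{\beta}^0_{2k+1} - \hat{\beta}^0_{2k+1}\times 1 - 1\times \hat{\beta}^0_{2k+1}$ lies in the image of $H^{4k}(\mathrm{Sp}(2g,\Z)\times \mathrm{Sp}(2g',\Z);\R)$ in $H^{4k}(\mathcal{M}_{g,1}\times \mathcal{M}_{g',1};\R)$.

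Finally I would finish by a parity argument in the stable range. By Theorem \ref{thm:borel} and the Morita--Mumford description recalled before Definition \ref{def:hb}, this image is the subalgebra of $H^*(\mathcal{M}_{g,1}\times \mathcal{M}_{g',1};\R)$ generated by the MMM classes of \emph{odd} indices from each factor. On the other hand, the defining choice made for $\hat{\beta}_{2k+1}$ forces $\hat{\beta}^0_{2k+1}$ to be a polynomial in $e_2, e_4,\ldots, e_{2k}$, and the well-known additivity $\mu_0^* e_i = e_i\times 1 + 1\times e_i$ of the MMM classes under boundary gluing then implies that $C$ is a polynomial in $e_{even}$ classes on the two factors. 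Since the subalgebras generated by $e_{odd}$ and by $e_{even}$ classes intersect only in $\R$, $C$ vanishes in positive degree, proving primitivity. The main obstacle is the cochain-level bookkeeping that isolates the Sp-cohomology contribution $w$; once that is done, the parity dichotomy between $e_{odd}$ (Sp-pullbacks) and $e_{even}$ (the MMM content of $\hat{\beta}^0_{2k+1}$) closes the argument cleanly.
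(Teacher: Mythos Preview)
Your proposal follows essentially the same route as the paper: run the cochain argument of Proposition~\ref{prop:Tbetap} through the enlarged diagram, isolate the primitivity defect $C$ as the pullback of a cocycle $w$ (the paper's $d''$) on $\mathrm{Sp}(2g,\Z)\times\mathrm{Sp}(2g',\Z)$, and then kill it by comparing the $e_{odd}$ subalgebra coming from Sp with the MMM content of $\hat{\beta}^0_{2k+1}$.

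There is one genuine imprecision in your final paragraph. The normalization in the definition of $\hat{\beta}_{2k+1}$ does \emph{not} force $\hat{\beta}^0_{2k+1}$ to be a polynomial in $e_2,e_4,\ldots,e_{2k}$: the only freedom available is adding $p_0^*$ of an Sp-cocycle, and the image of $p_0^*$ is exactly the subalgebra generated by the $e_{odd}$, so one can only remove monomials that are \emph{pure} products of odd-index classes; mixed monomials such as $e_1e_2$ could a priori remain. (That $\hat{\beta}^0_{2k+1}$ is actually a multiple of $e_{2k}$ is the content of Theorem~\ref{thm:hb}, whose proof uses the present proposition, so it cannot be invoked here.) The fix is exactly what the paper does: since every monomial appearing in $\hat{\beta}^0_{2k+1}$ contains at least one $e_{even}$ factor, the primitivity $\mu_0^*e_i=e_i\otimes 1+1\otimes e_i$ ensures that $\mu_0^*\hat{\beta}^0_{2k+1}$, and hence $C$, contains no term of the form $(\text{pure }e_{odd})\otimes(\text{pure }e_{odd})$; since $(p_0\times p_0)^*[w]$ is entirely of that form, one concludes $C=0$. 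With this correction your argument matches the paper's.
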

\begin{proof}
Proof is given by refining that of Proposition \ref{prop:Tbetap}.
Consider the following commutative diagram
$$
\begin{CD}
{} @. {}   \mathcal{M}_{g,1}\times   \mathcal{M}_{g',1}
@>{j\times j}>>  \mathrm{Aut}\,F_{2g}\times   \mathrm{Aut}\,F_{2g'} \\
  @.  @V{\mu_0}VV @V{\mu}VV\\
  \mathcal{M}_{g,1}\times   \mathcal{M}_{g',1}@>{\mu_0}>>  \mathcal{M}_{g+g'}
@>{j}>>  \mathrm{Aut}\,F_{2g+2g'} \\
  @ V{p_0\times p_0}VV @V{p_0}VV @V{p}VV\\
 \mathrm{Sp}(2g,\Z)\times   \mathrm{Sp}(2g',\Z)@>{\bar{\mu}_0}>>   \mathrm{Sp}(2g+2g',\Z)
 @>{\bar{j}}>>   \mathrm{GL}(2g+2g',\Z)\\
   @ V{\bar{j}\times \bar{j}}VV @V{\bar{j}}VV @.\\
 \mathrm{GL}(2g,\Z)\times   \mathrm{GL}(2g',\Z)@>{\bar{\mu}}>>   \mathrm{GL}(2g+2g',\Z)
 @. 
 \end{CD}
$$
together with the commutative diagram \eqref{eq:cd} where we replace $n$ and $n'$ with
$2g$ and $2g'$ respectively.
By the definition
$$
\hat{\beta}^0_{2k+1}=[j^* z_{4k}-p_0^* y_{4k}]
$$
so that we have to compute
\begin{equation}
\mu_0^*\hat{\beta}^0_{2k+1}=\mu_0^*[j^* z_{4k}-p_0^* y_{4k}].
\label{eq:mhb}
\end{equation}
First, we consider the first term in the above expression.
We have proved in Proposition \ref{prop:Tbetap} that
there exist cochains 
\begin{align*}
d \in C^{4k-1}&(\mathrm{GL}(2g,\Z)\times   \mathrm{GL}(2g',\Z);\R),\\
d'\in C^{4k-1}&(\mathrm{Aut}\,F_{2g}\times   \mathrm{Aut}\,F_{2g'};\R)
\end{align*}
such that
$$
\mu^*z_{4k}=z_{4k}\times 1+1\times z_{4k}+(p\times p)^* d+\delta d'.
$$
It follows that
\begin{equation}
\begin{split}
\mu_0^* j^* z_{4k}&=(j\times j)^*\mu^* z_{4k}\\
&=j^*z_{4k}\times 1+1\times j^*z_{4k}
+(j\times j)^*(p\times p)^* d+\delta (j\times j)^*d'.
\end{split}
\label{eq:mj}
\end{equation}

Next we consider the second term of \eqref{eq:mhb}.
By the definition we have $\bar{j}^*\beta_{2k+1}=\delta y_{4k}$. Hence
\begin{equation}
\bar{\mu}_0^*\bar{j}^*b_{2k+1}=\bar{\mu}_0^*\delta y_{4k}=\delta \bar{\mu}_0^*y_{4k}.
\label{eq:bmb}
\end{equation}
On the other hand
\begin{equation}
\begin{split}
\bar{\mu}_0^*\bar{j}^*b_{2k+1}&=(\bar{j}\times\bar{j})^* \bar{\mu}^* b_{2k+1}\\
&=(\bar{j}\times\bar{j})^*(b_{2k+1}\times 1+1\times b_{2k+1}+\delta d)\\
&=\delta y_{4k}\times 1 +1\times \delta y_{4k}+\delta (\bar{j}\times\bar{j})^*d.
\end{split}
\label{eq:bmbj}
\end{equation}
From \eqref{eq:bmb} and \eqref{eq:bmbj}, we obtain
$$
\delta\left(\bar{\mu}_0^*y_{4k}-y_{4k}\times 1 -1\times y_{4k}-(\bar{j}\times\bar{j})^*d\right)=0.
$$
Therefore, if we set
$$
d^{\prime\prime}=\bar{\mu}_0^*y_{4k}-y_{4k}\times 1 -1\times y_{4k}-(\bar{j}\times\bar{j})^*d,
$$
then $d^{\prime\prime}$ is a cocycle of the group $\mathrm{Sp}(2g,\Z)\times   \mathrm{Sp}(2g',\Z)$
so that we can consider its cohomology class
$$
[d^{\prime\prime}]\in H^{4k}(\mathrm{Sp}(2g,\Z)\times   \mathrm{Sp}(2g',\Z);\R)
$$
and 
$$
\bar{\mu}_0^*y_{4k}=y_{4k}\times 1 +1\times y_{4k}+(\bar{j}\times\bar{j})^*d+d^{\prime\prime}.
$$
It follows that
\begin{equation}
\begin{split}
\mu_0^*p_0^* y_{4k}&=(p_0\times p_0)^*\bar{\mu}_0^*y_{4k}\\
&=(p_0\times p_0)^*(y_{4k}\times 1 +1\times y_{4k}+(\bar{j}\times\bar{j})^*d+d^{\prime\prime}).
\end{split}
\label{eq:m0p}
\end{equation}
By combining \eqref{eq:mj} and \eqref{eq:m0p}, we obtain
\begin{equation}
\begin{split}
\mu_0^* j^* z_{4k}-\mu_0^*p_0^* y_{4k}
=&j^*z_{4k}\times 1+1\times j^*z_{4k}
+(j\times j)^*(p\times p)^* d+\delta (j\times j)^*d'\\
&\hspace{7mm}-(p_0\times p_0)^*(y_{4k}\times 1 +1\times y_{4k}+(\bar{j}\times\bar{j})^*d+d^{\prime\prime})\\
=&(j^* z_{4k}-p_0^* y_{4k})\times 1 +1\times (j^* z_{4k}-p_0^* y_{4k})\\
& \hspace{7mm} +\delta (j\times j)^*d'-(p_0\times p_0)^*d^{\prime\prime}.
\end{split}
\label{eq:f2}
\end{equation}
Here we have used the equality

$$
(j\times j)^*(p\times p)^* d=(p_0\times p_0)^*(\bar{j}\times\bar{j})^*d
$$
which follows form the commutativity of the following diagram
$$
\begin{CD}
 \mathcal{M}_{g,1}\times   \mathcal{M}_{g',1}  @>{j\times j}>>  \mathrm{Aut}\,F_{2g}\times   \mathrm{Aut}\,F_{2g'} \\
@V{p_0\times p_0}VV @V{p\times p}VV \\
\mathrm{Sp}(2g,\Z)\times   \mathrm{Sp}(2g',\Z) @>{\bar{j}\times \bar{j}}>>  \mathrm{GL}(2g,\Z)\times   \mathrm{GL}(2g',\Z).
\end{CD}
$$
By combining \eqref{eq:mhb} with \eqref{eq:f2} above, we now conclude that
\begin{equation}
\begin{split}
\mu_0^*\hat{\beta}^0_{2k+1}&=\mu_0^*[j^* z_{4k}-p_0^* y_{4k}]\\
&=[j^* z_{4k}-p_0^* y_{4k}]\times 1 +1\times [j^* z_{4k}-p_0^* y_{4k}]
-(p_0\times p_0)^*[d^{\prime\prime}]\\
&=\hat{\beta}^0_{2k+1}\otimes 1 +1\otimes \hat{\beta}^0_{2k+1}-(p_0\times p_0)^*[d^{\prime\prime}].
\end{split}
\label{eq:fhb}
\end{equation}
By the definition of the class $\hat{\beta}^0_{2k+1}$ again, it contains no monomials 
of $\mathrm{MMM}$ classes of {\it odd} indices, namely those of 
the form $e_{2i_1-1}^{j_1}\cdots e_{2i_s-1}^{j_s}$. 
Also any $\mathrm{MMM}$ class $e_i$ is primitive (see \cite{miller} \cite{morita87}) so that
$$
\mu_0^* e_i=e_i\otimes 1+1\otimes e_i\in H^{4i}( \mathcal{M}_{g,1}\times   \mathcal{M}_{g',1};\Q).
$$
It follows that the class $\mu_0^* \hat{\beta}^0_{2k+1}$ does not contain 
any term of the following form
\begin{equation}
e_{2i_1-1}^{j_1}\cdots e_{2i_s-1}^{j_s}\otimes e_{2i'_1-1}^{j'_1}\cdots e_{2i'_t-1}^{j'_t}.
\label{eq:pure}
\end{equation}
Now as was recalled in $\S 2$, Borel \cite{borel1} proved that
$$
H^*(\mathrm{Sp}(2g,\Z);\Q)\cong \Q[c_1,c_3,\ldots]
$$
in a certain stable range.
It follows that 
$$
H^*(\mathrm{Sp}(2g,\Z)\times\mathrm{Sp}(2g,\Z);\Q) \cong \Q[c_1,c_3,\ldots]\otimes \Q[c_1,c_3,\ldots]
$$
again in a certain stable range. The cohomology class $[d^{\prime\prime}]$ 
appearing in \eqref{eq:fhb} belongs to this group
but with the {\it real} coefficients.

On the other hand, as was already mentioned above, 
the homomorphism
$$
p_0^*: H^*(\mathrm{Sp}(2g,\Z);\Q)\rightarrow H^*(\mathcal{M}_{g,1};\Q)
$$
is known to be injective, in a certain stable range, and its image is precisely the subalgebra of $\Q[e_1,e_2,\ldots]$
generated by $e_{odd}$ classes. 
Hence the class
$(p_0\times p_0)^*[d^{\prime\prime}]$ is a linear combination of terms of the
form described in \eqref{eq:pure}.

By the above argument, we finally conclude that $[d^{\prime\prime}]=0$ and hence
$$
\mu_0^*\hat{\beta}^0_{2k+1}=\hat{\beta}^0_{2k+1}\otimes 1 +1\otimes \hat{\beta}^0_{2k+1}
$$
as required. This completes the proof.
\end{proof}

The following is the main theorem of this section.

\begin{thm}
The secondary class $\hat{\beta}^0_{2k+1}\in H^{4k}(\mathcal{M}_{g,1};\R)$ is a non-zero multiple of 
the $2k$-th $\mathrm{MMM}$ class $e_{2k}$. More precisely
$$
\hat{\beta}^0_{2k+1}=(-1)^k \zeta(2k+1)\frac{e_{2k}}{2(2k)!}.
$$
\label{thm:hb}
\end{thm}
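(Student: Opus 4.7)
The plan is to reduce the problem to two steps: (i) showing that $\hat{\beta}^0_{2k+1}$ must be a scalar multiple of $e_{2k}$, and (ii) pinning down the scalar by identifying the class with Igusa's higher FR torsion $\tau_{2k}(\mathcal{M}_{g,1})$, paralleling the strategy used for Theorem \ref{thm:Tb}.

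For step (i), I would work in the stable range. By Theorem \ref{thm:mw}, $H^*(\mathcal{M}_{g,1}; \R) \cong \R[e_1, e_2, \ldots]$, and each MMM class $e_i$ is primitive under the gluing product $\mu_0$. Hence the space of primitive elements in degree $4k$ of this polynomial algebra is exactly $\R \cdot e_{2k}$. Proposition \ref{prop:hbetap} shows that $\hat{\beta}^0_{2k+1}$ is primitive, so $\hat{\beta}^0_{2k+1} = c_k\, e_{2k}$ for some $c_k \in \R$ in the stable range; by the stability built into Definition \ref{def:hb} this equation then persists for all $g$.

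For step (ii), I would mimic the diagrammatic proof of Theorem \ref{thm:Tb}, now incorporating simultaneously the inclusion $\mathcal{M}_{g,1} \hookrightarrow \mathrm{Aut}\, F_{2g}$ and the symplectic projection $\mathcal{M}_{g,1} \to \mathrm{Sp}(2g,\Z)$, together with Igusa's Whitehead space $|\mathcal{W}h^h_\cdot(\Z,1)|$. The cochain $j^*z_{4k} - p_0^*y_{4k}$ of Definition \ref{def:hb} is nothing but a double trivialization of the Borel cocycle $b_{2k+1}$: first along $\mathrm{Out}\, F_{2g}$ using Galatius's vanishing (Theorem \ref{thm:galatius}), and second along $\mathrm{Sp}(2g,\Z)$ using Borel's vanishing of odd-degree stable cohomology (Theorem \ref{thm:borel}). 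This two-sided trivialization is precisely what realizes $\tau_{2k}(\mathcal{M}_{g,1})$ as the pullback of Igusa's universal class $\tau_{2k} \in H^{4k}(|\mathcal{W}h^h_\cdot(\Z,1)|; \R)$, via the rational homotopy equivalences furnished by Galatius on the free-group side and by Borel on the symplectic side. Once the identification $\hat{\beta}^0_{2k+1} = \tau_{2k}(\mathcal{M}_{g,1})$ is established, Igusa's explicit evaluation of his higher torsion on the stable mapping class group gives
\[
\tau_{2k}(\mathcal{M}_{g,1}) = (-1)^k \zeta(2k+1)\, \frac{e_{2k}}{2(2k)!},
\]
where the factor $\zeta(2k+1)$ reflects the standard normalization of the Borel regulator.

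The main obstacle is step (ii): making the diagrammatic comparison with Igusa's construction of $\tau_{2k}(\mathcal{M}_{g,1})$ precise, and in particular tracking normalization constants so that the scalar $c_k$ from step (i) agrees exactly with Igusa's formula rather than only up to a rational multiple. Unlike the $\mathrm{IOut}_n$ case, here one must combine Galatius's vanishing with Borel's vanishing of odd-degree symplectic cohomology in a single spectral sequence argument, disentangling the contributions of the two fibrations over $\mathrm{BOut}\, F_{2g}$ and $\mathrm{BSp}(2g,\Z)$ that separately trivialize $\beta_{2k+1}$.
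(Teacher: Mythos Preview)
Your proposal is correct and follows essentially the same approach as the paper: step~(i) matches exactly, and step~(ii) --- identifying $\hat{\beta}^0_{2k+1}$ with Igusa's higher FR torsion via a diagrammatic comparison that combines Galatius's vanishing on the free-group side with Borel's vanishing on the symplectic side, then invoking Igusa's explicit computation (with Hain and Penner) --- is precisely the paper's strategy. The paper makes the comparison slightly more concrete by choosing a rational null-homotopy $H$ of $\mathrm{BSp}(2\infty,\Z)\to \Z\times\mathrm{BGL}(\infty,\Z)^+$ and lifting it through $QS^0$ to obtain Igusa's map $\mathrm{BOut}^{\mathrm{sp}}F_{2g}\to |\mathcal{W}h^h_\cdot(\Z,1)|$, but this is exactly the ``double trivialization'' you describe, and the paper itself concedes (in a remark) that the non-triviality and the precise constant ultimately rest on Igusa's work rather than on a self-contained cohomological argument.
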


\begin{proof}
By the previous proposition, $\hat{\beta}^0_{2k+1}$ is a primitive cohomology class
contained in $H^{4k}(\mathcal{M}_{g,1};\R)$. On the other hand, by the result of 
Madsen and Weiss (Theorem \ref{thm:mw}), the stable rational cohomology group
of the mapping class group $\mathcal{M}_{g,1}$ is isomorphic to the
polynomial algebra generated by the $\mathrm{MMM}$ classes $e_i$
all of which are primitive as mentined above. Hence we conclude that
$\hat{\beta}^0_{2k+1}$ is a multiple of $e_{2k}$.

The final part of the proof, namely the non-triviality of our class
$\hat{\beta}^0_{2k+1}$, relies crucially on two results of Igusa given in \cite{i}.
One is his construction of a map
\begin{equation}
\mathrm{BOut}^{sp} F_{2g}\rightarrow |\mathcal{W}h^h_{\cdot}(\Z,1)|
\label{eq:outsp}
\end{equation}
which induces the higher torsion classes of the group $\mathrm{Out}^{sp} F_{2g}$.
The other is his determination (with Hain and Penner) of the higher torsion classes of the mapping class
group (Theorem  8.5.10 of \cite{i}).
Here Igusa mentions that, although his axiomatic higher torsion (\cite{ia}) is only defined on 
the Torelli group $\mathcal{I}_{g,*}$ and not on the mapping class group (surface bundles are not unipotent
bundles in general), a higher $\mathrm{FR}$ torsion is defined on $\mathcal{M}_{g,*}$ as well
by using, what he calls, a framed function on even fiberwise suspension of 
the total spaces of surface bundles with sections.
We refer to Igusa's books \cite{i}\cite{i2} for details.

Now we would like to consider the above mapping \eqref{eq:outsp} in our context
by examining the following homotopy commutative diagram.
$$
\begin{CD}
{} @.{} @. {} @. {}   \Omega\mathrm{BGL}(\infty,\Z)^+\\
@. @.  @.  @V{\bar{i}_0}VV \\
  \mathrm{B}\mathcal{I}_{g,*} @>{\mathrm{B}j_0}>>\mathrm{BIOut}_{2g} @=  \mathrm{BIOut}_{2g} @>{f_0}>>   |\mathcal{W}h^h_{\cdot}(\Z,1)|\\\
@ V{\mathrm{B}i_0}VV@ V{\mathrm{B}i_0^{sp}}VV  @ V{\mathrm{B}i}VV @V{\bar{i}}VV \\
 \mathrm{B}\mathcal{M}_{g,*}@>{\mathrm{B}j}>> \mathrm{BOut}^{sp} F_{2g}@>{\mathrm{B}j^{sp}}>> \mathrm{BOut}\,F_{2g}@>{f}>>  \Z\times\mathrm{BOut}^+_\infty\overset{h.e.}{\cong}QS^0\\
@ V{\mathrm{B}p_0}VV@ V{\mathrm{B}p_0^{sp}}VV   @ V{\mathrm{B}p}VV @V{\bar{p}}VV \\
 \mathrm{BSp}(2g,\Z)@= \mathrm{BSp}(2g,\Z)@>{\mathrm{B}\bar{j}}>> \mathrm{BGL}(2g,\Z)@>{\bar{f}}>>   \Z\times \mathrm{BGL}(\infty,\Z)^+ .
 \end{CD}
$$
By the result of Borel, the composed mapping $\bar{f}\circ \mathrm{B}\bar{j}$
is rationally homotopic to the constant map when we let $g$ go to the infinity. 
We choose a homotopy 
$$
H(\hspace{1mm}, t): \mathrm{BSp}(2\infty,\Z)\times I\rightarrow \Z\times \mathrm{BGL}(\infty,\Z)^+ 
$$
such that $H(\hspace{1mm}, 0)=\bar{f}\circ \mathrm{B}\bar{j}$ and $H(\hspace{1mm}, 1)$ is the constant map
and let 
$$
\widetilde{H}(\hspace{1mm}, t): \mathrm{BOut}^{sp} F_{2\infty}\times I\rightarrow \Z\times\mathrm{BOut}^+_\infty\overset{h.e.}{\cong}QS^0 
$$
be the mapping which covers $H$ and $\widetilde{H}(\hspace{1mm},0)=f\circ Bj^{sp}$.
Then the mapping $\widetilde{H}(\hspace{1mm},1)$ gives the
desired mapping \eqref{eq:outsp} (but defined only over the rationals).
In this situation, we can refine the argument of the proof of Theorem \ref{thm:Tb}.
More precisely, taking the homotopies $H, \widetilde{H}$ into account  we adjust
the cochain $j^*z_{4k}$ by subtracting $p_0^* y_{4k}$ to make a cocycle. We then
conclude that our construction of the secondary class $\hat{\beta}^0_{2k}$
``realizes"  the higher torsion classes induced by \eqref{eq:outsp} in the framework of group cocycles.

Then the non-triviality as well as the precise constant follows from the determination of the
higher torsion classes mentioned above.
\end{proof}

\begin{remark}
We have given the proof of Theorem \ref{thm:hb} above
within the framework of the theory of cohomology of groups
as much as possible, more precisely except for the non-triviality
(which is the most important property of course).
It should be desirable to have a proof of the non-triviality purely in the context of 
the present paper.
\label{rem:bhat}
\end{remark}

\begin{remark}
We call our classes {\it secondary} characteristic classes associated with the
vanishing of the Borel classes. However, the Borel classes are already
secondary classes associated with the vanishing of the Chern classes
on certain flat bundles. Therefore our classes are,
so to speak, {\it secondary secondary} classes. Our result shows that these
classes go back to the primary classes of the mapping class group,
namely the $\mathrm{MMM}$ classes (of even indices).
\label{rem:sec}
\end{remark}

\section{Conjectural geometric meaning of the Morita classes}\label{sec:cgm}

In this section, we propose a conjectural meaning of the Morita classes
$$
\mu_k\in H_{4k}(\mathrm{Out}\,F_{2k+2};\Q)\quad (k=1,2,\ldots)
$$
which were introduced in \cite{morita99} by making essential use of the
foundational works of Culler and Vogtmann \cite{cuv} and 
Kontsevich \cite{kontsevich1}\cite{kontsevich2}.
It was conjectured there that all the classes are non-trivial.
At present, only the first three classes have been proved to be non-trivial
(\cite{morita99}\cite{cov}\cite{gr}).

We expect that the Morita classes will be detected by certain
secondary classes associated with the difference between
two reasons for the vanishing of Borel regulator classes.
The definition of our secondary classes is given
in a similar way as the case of the mapping class group treated in the previous section
(although our actual development was done in the reverse order).
More precisely, in that case we made use of the vanishing of $\beta_{2k+1}$ 
on $\mathrm{Out}\,F_{n}$ as well as on $\mathrm{Sp}(2g,\Z)$, while
in the present case we replace $\mathrm{Sp}(2g,\Z)$ with
$\mathrm{GL}(2k+2,\Z)$ where $2k+2$ is our conjectural optimal
rank where $\beta_{2k+1}$ vanishes.

Let us first recall a paper \cite{lee} by Lee where he mentioned that
$\beta_{2k+1}$ does not vanish in $H^{4k+1}(\mathrm{GL}(n,\Z);\R)$
for all $n\geq 2k+3$. On the other hand, we have the following vanishing result.

\begin{thm}[Bismut-Lott \cite{bl}, Lee \cite{lee}, Franke \cite{franke}]
For any integer $k=1,2,\ldots$, the Borel regulator class
$\beta_{2k+1}$ vanishes in $H^{4k+1}(\mathrm{GL}(2k+1,\Z);\R)$.
\label{th:bl}
\end{thm}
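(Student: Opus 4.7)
The plan is to reduce the vanishing of $\beta_{2k+1}\in H^{4k+1}(\mathrm{GL}(2k+1,\Z);\R)$ to a computation on the Borel-Serre compactification of the locally symmetric space $\mathrm{SL}(n,\Z)\backslash X_n$, where $X_n = \mathrm{SL}(n,\R)/\mathrm{SO}(n)$. By the van Est / Matsushima isomorphism, $\beta_{2k+1}$ is represented by an $\mathrm{SL}(n,\R)$-invariant closed $(4k+1)$-form $\omega_{2k+1}$ on $X_n$, corresponding via the compact-dual duality to a primitive generator of $H^{4k+1}(\mathrm{SU}(n)/\mathrm{SO}(n);\R)$. The cohomology class on $\mathrm{GL}(n,\Z)$ is then obtained by pulling $\omega_{2k+1}$ back to the arithmetic quotient and pairing with cycles, after a suitable regularization near the cusps.

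First I would decompose $H^*(\mathrm{GL}(n,\Z);\R)$ into its cuspidal and Eisenstein parts via the Harder-Franke formalism. The primitivity of $\beta_{2k+1}$ (Proposition \ref{prop:betap}), combined with its compatibility with parabolic induction, pins $\beta_{2k+1}$ inside a specific Eisenstein component supported on maximal parabolics with Levi factors $\mathrm{GL}(m,\Z)\times \mathrm{GL}(n-m,\Z)$ for $1\le m<n$. At the critical unstable value $n = 2k+1$, the numerical coincidence $4k+1 = 2n-1$ places the degree at the extreme top of the primitive range of the compact dual; combined with Lee's non-vanishing range $n\ge 2k+3$ for $\beta_{2k+1}$ on the Levi factors, a parity/dimension count is intended to rule out every possible surviving Eisenstein contribution, forcing $\beta_{2k+1}$ to vanish on $\mathrm{GL}(2k+1,\Z)$.

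The main obstacle is turning this heuristic Eisenstein-cohomology picture into a rigorous vanishing statement, and this is precisely the content of the three cited references, which achieve it by quite different methods: Bismut-Lott proceed analytically via their higher analytic torsion theorem and a heat-kernel regularization of $\omega_{2k+1}$; Franke deduces the vanishing from his general structure theorem for Eisenstein cohomology of arithmetic groups; and Lee performs a direct computation inside Franke's spectral sequence. For the present paper I would cite this combined theorem rather than reproduce any of the three arguments, each of which requires substantial machinery well beyond the group-cohomological framework of \S\ref{sec:scia} and \S\ref{sec:scm}.
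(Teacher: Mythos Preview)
The paper does not give a proof of this theorem at all: it is stated with attribution to Bismut--Lott, Lee, and Franke and then used as input. Your final recommendation --- to cite the combined result rather than reproduce any of the three arguments --- is exactly what the paper does, so at the level of what actually appears, your proposal and the paper agree.

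The heuristic sketch you add beforehand is not a proof, and you acknowledge as much. A couple of points on that sketch are worth noting. First, the chronology is off: Lee's note \cite{lee} dates from 1978, well before Franke's 1991 preprint, so it cannot be ``a direct computation inside Franke's spectral sequence.'' Second, the dimension/parity argument you outline (placing the degree $4k+1$ at the top of the primitive range for $n=2k+1$ and invoking Lee's non-vanishing for Levi factors) is suggestive but does not by itself rule out every Eisenstein contribution; this is precisely why the full machinery of the cited references is needed. Since the paper treats this theorem purely as a black-box citation, the sketch is extraneous for present purposes, and the clean statement-plus-citation is both what the paper does and what is appropriate here.
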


Thus the remaining open problem for the (non-)triviality of $\beta_{2k+1}$ is as follows.

\begin{problem}
For each integer $k\geq 1$,
determine whether the Borel class $\beta_{2k+1}$ vanish in $H^{4k+1}(\mathrm{GL}(2k+2,\Z);\R)$
or not.
\end{problem}

The first two cases of the above problem have been solved.
Lee and Szczarba \cite{ls} proved that
$H^5(\mathrm{GL}(4,\Z);\Q)=0$ so that
$\beta_3=0\in H^5(\mathrm{GL}(4,\Z);\R)$. Also
Elbaz-Vincent-Gangl-Soul\'e \cite{egs} proved that
$H^9(\mathrm{GL}(6,\Z);\Q)=0$ so that
$\beta_5=0\in H^9(\mathrm{GL}(6,\Z);\R)$.

Based on these results, we would like to propose the following.
\begin{conj}
For any integer $k=1,2,\ldots$, the Borel regulator class
$\beta_{2k+1}$ vanishes in $H^{4k+1}(\mathrm{GL}(2k+2,\Z);\R)$.
\label{conj:cv}
\end{conj}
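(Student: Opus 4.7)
The plan is to adapt the analytic machinery of Bismut-Lott, Lee and Franke that established Theorem \ref{th:bl} at rank $2k+1$, pushing it up by one to rank $2k+2$. Since Lee \cite{lee} has shown $\beta_{2k+1}\neq 0$ in $H^{4k+1}(\mathrm{GL}(n,\Z);\R)$ for every $n\geq 2k+3$, any proof must exploit the specific rank $n=2k+2$; in particular no stability-type reduction to the large-$n$ regime can possibly work, and no argument about restriction to $\mathrm{GL}(2k+1,\Z)$ can be used alone, since the restriction map need not be injective in this degree. The basic strategy is to use the Borel-Serre compactification of the locally symmetric space associated to $\mathrm{GL}(2k+2,\Z)$ and the decomposition of $H^{4k+1}$ into cuspidal and Eisenstein components.

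First I would place $\beta_{2k+1}$ inside Franke's explicit description of Eisenstein cohomology, continuing in the spirit of the computations of Lee-Szczarba and Elbaz-Vincent-Gangl-Soul\'e. The Borel class is expected to arise as a residue at $s=2k+1$ of an Eisenstein series induced from the maximal parabolic $P\subset \mathrm{GL}(2k+2)$ with Levi $\mathrm{GL}(2k+1)\times \mathrm{GL}(1)$, where the induction datum on the $\mathrm{GL}(2k+1,\Z)$-factor is the Borel regulator itself. Because this datum is already zero by Theorem \ref{th:bl}, the induced Eisenstein class should vanish, yielding the vanishing of the corresponding contribution to $\beta_{2k+1}$ on $\mathrm{GL}(2k+2,\Z)$. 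One then needs to check that the cuspidal component in degree $4k+1$ vanishes, which is plausible since $n=2k+2$ sits exactly at the first rank where Borel's stability theorem delivers a non-trivial stable class and there is no room for an unexpected cuspidal contribution below it.

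The hardest step will be ruling out unexpected contributions from the non-maximal parabolics with Levi $\mathrm{GL}(j)\times \mathrm{GL}(2k+2-j)$ for $2\leq j\leq 2k$: such contributions involve cup products of lower-rank cohomology with special values of Hecke $L$-functions, and it is conceivable that in the exact boundary range $n=2k+2$ they combine to produce a non-trivial class that coincides with $\beta_{2k+1}$ rather than lying in a complementary summand. The proofs of Lee-Szczarba ($k=1$) and Elbaz-Vincent-Gangl-Soul\'e ($k=2$) bypass this issue by direct computer-assisted enumeration of the full rational cohomology, but that approach rapidly becomes infeasible for larger $k$. A structural substitute---perhaps a new vanishing identity involving $\zeta(2k+1)$ and regulator constants, or a Poincar\'e duality argument on the Borel-Serre boundary relating rank $2k+2$ to the already-settled rank $2k+1$---appears to be the core obstacle. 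Any such input would simultaneously guarantee well-definedness of the unstable secondary classes invoked in the remainder of $\S 5$ to give conjectural geometric meaning to the Morita classes $\mu_k$.
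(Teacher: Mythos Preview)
The statement you are attempting to prove is labeled \textbf{Conjecture} in the paper, not Theorem: the authors do \emph{not} prove it, and indeed they immediately write ``Assuming this conjecture, we define secondary cohomology classes $\overset{\circ}{\beta}_{2k+1}$\ldots''. The paper only records that the cases $k=1$ and $k=2$ are known (Lee--Szczarba and Elbaz-Vincent--Gangl--Soul\'e, both by direct computation of the full rational cohomology), and poses the general statement as an open problem. So there is no proof in the paper for your proposal to be compared against.

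As for the proposal itself, it is not a proof but a research outline, and you are candid about this: you identify the crucial obstacle yourself when you write that contributions from non-maximal parabolics ``appear to be the core obstacle'' and that a ``structural substitute'' for the computer-assisted enumerations of the known cases would be needed. Nothing in your sketch supplies that substitute. The heuristic that the Eisenstein class induced from the maximal parabolic with Levi $\mathrm{GL}(2k+1)\times\mathrm{GL}(1)$ vanishes because the inducing datum on $\mathrm{GL}(2k+1,\Z)$ vanishes (Theorem~\ref{th:bl}) is plausible, but it does not by itself force the Borel class to lie in that particular summand of Franke's decomposition, nor does it control the cuspidal part or the other parabolic contributions. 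Your remark that ``there is no room for an unexpected cuspidal contribution'' is an expectation, not an argument. In short, the proposal correctly locates the problem in the landscape of Eisenstein cohomology and automorphic methods, but the decisive step is missing---which is consistent with the fact that the statement remains, in the paper and in the literature, a conjecture.
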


Assuming this conjecture, we define secondary cohomology classes 
$$
\overset{\circ}{\beta}_{2k+1}\in H^{4k}(\mathrm{Aut}\, F_{2k+2};\R)
$$
as follows. 
Let $b_{2k+1}\in Z^{4k+1}(\mathrm{GL}(n,\Z);\R)$ be a cocycle representing
$\beta_{2k+1}$ as before and consider the following commutative diagram.

$$
\begin{CD}
  \mathrm{Aut}\,F_{2k+2} @>{j}>> \mathrm{Aut}\,F_{n}\\
@ V{p_0}VV @V{p}VV\\
 \mathrm{GL}(2k+2,\Z) @>{\bar{j}}>> \mathrm{GL}(n,\Z).
\end{CD}
$$
Then, again as before, there exists a cochain $z_{4k}\in C^{4k}(\mathrm{Aut}\, F_n;\R)$
such that 
$$
p^*b_{2k+1}=\delta z_{4k}.
$$
Next $\bar{j}^*\beta_{2k+1}=0\in H^{4k+1}(\mathrm{GL}(2k+2,\Z);\R)$ by the 
assumption. Hence there exists a cochain $x_{4k}\in C^{4k}(\mathrm{GL}(2k+2,\Z);\R)$
such that
$$
\bar{j}^*b_{2k+1}=\delta x_{4k}.
$$
Then we have
$$
\delta(j^* z_{4k}-p_0^* x_{4k})=j^*p^*b_{2k+1}-p_0^*\bar{j}^*b_{2k+1}=0
$$
so that $(j^* z_{4k}-p_0^* x_{4k})$ is a $4k$-cocycle of the group $\mathrm{Aut}\, F_{2k+2}$.

\begin{definition}
We define
$$
\overset{\circ}{\beta}_{2k+1}=[j^* z_{4k}-p_0^* x_{4k}]\in H^{4k}(\mathrm{Aut}\, F_{2k+2};\R).
$$
\end{definition}

\begin{prop}
The cohomology class $\overset{\circ}{\beta}_{2k+1}$ 
is well-defined independent of the choices of $b_{2k+1}$,$z_{4k}$
and $x_{4k}$ modulo the indeterminacy 
$$
\mathrm{Im}\left(H^{4k}(\mathrm{GL}(2k+2,\Z);\R)\rightarrow H^{4k}(\mathrm{Aut}\,F_{2k+2};\R)\right).
$$
\label{prop:circb}
\end{prop}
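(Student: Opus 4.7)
The plan is to mimic the proof of the corresponding well-definedness result in Section 4 (for $\hat{\beta}_{2k+1}$), adapted to the present situation where the base $\mathrm{GL}(2k+2,\Z)$ has non-trivial cohomology in degree $4k$, which is precisely the source of the stated indeterminacy. Throughout, I would fix $n$ in the stable range of $\mathrm{Aut}\,F_n$ so that Galatius' vanishing theorem (Theorem \ref{thm:galatius}) applies and, after proving the statement in that range, pull back along the natural inclusion $\mathrm{Aut}\,F_{2k+2}\hookrightarrow \mathrm{Aut}\,F_n$.

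First I would take alternative choices $b'_{2k+1}\in Z^{4k+1}(\mathrm{GL}(n,\Z);\R)$, $z'_{4k}\in C^{4k}(\mathrm{Aut}\,F_n;\R)$ with $\delta z'_{4k}=p^*b'_{2k+1}$, and $x'_{4k}\in C^{4k}(\mathrm{GL}(2k+2,\Z);\R)$ with $\delta x'_{4k}=\bar{j}^*b'_{2k+1}$. Since $b'_{2k+1}$ and $b_{2k+1}$ represent the same class, there exists $u\in C^{4k}(\mathrm{GL}(n,\Z);\R)$ with $b'_{2k+1}-b_{2k+1}=\delta u$. Then $z'_{4k}-z_{4k}-p^*u$ is a cocycle on $\mathrm{Aut}\,F_n$, and by Galatius it is a coboundary $\delta v$ for some $v\in C^{4k-1}(\mathrm{Aut}\,F_n;\R)$. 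Similarly $w:=x'_{4k}-x_{4k}-\bar{j}^*u$ is a cocycle on $\mathrm{GL}(2k+2,\Z)$, but now there is no vanishing theorem available: $w$ represents a potentially non-trivial class $[w]\in H^{4k}(\mathrm{GL}(2k+2,\Z);\R)$.

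Next I would compute directly, using the commutativity $j^*p^*=p_0^*\bar{j}^*$ of the defining square, that
\begin{align*}
j^*z'_{4k}-p_0^*x'_{4k}
&=j^*(z_{4k}+p^*u+\delta v)-p_0^*(x_{4k}+\bar{j}^*u+w)\\
&=(j^*z_{4k}-p_0^*x_{4k})+\delta j^*v-p_0^*w,
\end{align*}
since the terms $j^*p^*u$ and $p_0^*\bar{j}^*u$ cancel. Passing to cohomology,
\[
[j^*z'_{4k}-p_0^*x'_{4k}]=[j^*z_{4k}-p_0^*x_{4k}]-p_0^*[w]\in H^{4k}(\mathrm{Aut}\,F_{2k+2};\R),
\]
so the two representatives of $\overset{\circ}{\beta}_{2k+1}$ differ by an element of $\mathrm{Im}(p_0^*)$, which is precisely the claimed indeterminacy.

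Finally I would observe that the special cases of fixing $b_{2k+1}$ and varying only $z_{4k}$ or only $x_{4k}$ are already covered: a change in $z_{4k}$ by a cocycle is killed, via Galatius, up to a coboundary on $\mathrm{Aut}\,F_n$, whereas a change in $x_{4k}$ by a cocycle $w$ contributes exactly $-p_0^*[w]$, again in the stated indeterminacy. I do not expect any real obstacle here beyond being careful that $n$ is in Galatius' stable range; the result for all $n$ (including the borderline value $2k+2$) then follows by naturality under the stabilization inclusion $\mathrm{Aut}\,F_{2k+2}\hookrightarrow \mathrm{Aut}\,F_N$ with $N\gg 0$, as in Definition \ref{def:Tb}.
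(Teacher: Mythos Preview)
Your proof is correct and follows essentially the same route as the paper: choose $u$ with $\delta u=b'_{2k+1}-b_{2k+1}$, invoke Galatius to get $v$ with $z'_{4k}=z_{4k}+p^*u+\delta v$, set $w=x'_{4k}-x_{4k}-\bar{j}^*u$, and compute that the two representatives differ by $-p_0^*[w]+\delta j^*v$. The only cosmetic difference is ordering: the paper first notes that varying $x_{4k}$ alone already realizes the full indeterminacy, whereas you record this at the end; and your explicit remark about the stable range for $n$ is implicit in the paper's ``as before''.
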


\begin{proof}
First observe that we can add any $4k$-cocycle of $\mathrm{GL}(2k+2,\Z)$
to a given $x_{4k}$ so that the cohomology class $\overset{\circ}{\beta}_{2k+1}$ can
vary freely within the indeterminacy.
Now let $b'_{2k+1}, z'_{4k}, x'_{4k}$ be another set of choices so that
$$
p^*b'_{2k+1}=\delta z'_{4k},\quad \bar{j}^*b'_{2k+1}=\delta x'_{4k}.
$$
Then, as before, there exist
elements $u\in C^{4k}(\mathrm{GL}(n,\Z);\R)$ and  $v\in C^{4k-1}(\mathrm{Out}\,F_n;\R)$ such that
$$
b'_{2k+1}=b_{2k+1}+\delta u\quad \text{and}\quad z'_{4k}=z_{4k}+p^*u+\delta v.
$$
It follows that
$$
\delta x'_{4k}=\bar{j}^*b'_{2k+1}=\bar{j}^*(b_{2k+1}+\delta u)=\delta x_{4k}+\bar{j}^*\delta u
$$
and so $(x'_{4k}-x_{4k}-u)$ is a cocycle of $\mathrm{GL}(2k+2,\Z)$ and we have
$$
[x'_{4k}-x_{4k}-\bar{j}^*u]\in H^{4k}(\mathrm{GL}(2k+2,\Z);\R).
$$
On the other hand
\begin{align*}
(j^* z'_{4k}-p_0^* x'_{4k})-(j^* z_{4k}-p_0^* x_{4k})&=j^*(p^*u+\delta v)-p_0^*(x'_{4k}-x_{4k})\\
&=-p_0^*(x'_{4k}-x_{4k}-\bar{j}^*u)+\delta j^*v.
\end{align*}
Hence
$$
[j^* z'_{4k}-p_0^* x'_{4k}]=[j^* z_{4k}-p_0^* x_{4k}]-p_0^*[x'_{4k}-x_{4k}-\bar{j}^*u]
\in H^{4k}(\mathrm{Aut}\,F_{2k+2};\R).
$$
Therefore 
$$
[j^* z'_{4k}-p_0^* x'_{4k}]=[j^* z_{4k}-p_0^* x_{4k}]\quad \text{$\mathrm{mod}$ indeterminacy}
$$
as required.
\end{proof}

There is a close relation between the secondary classes $\overset{\circ}{\beta}_{2k+1}$ and $T\beta_{2k+1}$
as below, which shows that the class $\overset{\circ}{\beta}_{2k+1}$ can be interpreted as an
``extension" of $T\beta_{2k+1}$ to the whole group $\mathrm{Aut}\, F_{2k+2}$ 
at the ``critical rank" $n=2k+2$.

\begin{prop}
Let $i: \mathrm{IA}_{2k+2}\subset \mathrm{Aut}\,F_{2k+2}$ be the inclusion. Then for any choice
of $\overset{\circ}{\beta}_{2k+1}$ within its indeterminacy, we have the identity
$$
i^* \overset{\circ}{\beta}_{2k+1}=T\beta_{2k+1}\in H^{4k}(\mathrm{IA}_{2k+2};\R)^{\mathrm{GL}(2k+2,\Z)}.
$$
\end{prop}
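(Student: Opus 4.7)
My plan is to compute $i^*\overset{\circ}{\beta}_{2k+1}$ directly from its cochain-level definition and then recognize the outcome as $T\beta^0_{2k+1}$ by way of the stability statement in Definition \ref{def:Tb}. Recall that $\overset{\circ}{\beta}_{2k+1}$ is represented by the cocycle $j^*z_{4k}-p_0^*x_{4k}$ on $\mathrm{Aut}\,F_{2k+2}$, with $z_{4k}\in C^{4k}(\mathrm{Aut}\,F_n;\R)$ (for $n$ in the stable range) satisfying $\delta z_{4k}=p^*b_{2k+1}$ and $x_{4k}\in C^{4k}(\mathrm{GL}(2k+2,\Z);\R)$ satisfying $\delta x_{4k}=\bar{j}^*b_{2k+1}$.

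The first step, which is essentially trivial, is to dispose of the term $p_0^*x_{4k}$. Since $\mathrm{IA}_{2k+2}=\Ker p_0$, the composition $p_0\circ i$ is the trivial homomorphism, so the induced map
$$
i^*p_0^*:H^{4k}(\mathrm{GL}(2k+2,\Z);\R)\longrightarrow H^{4k}(\mathrm{IA}_{2k+2};\R)
$$
factors through a point and vanishes in positive degrees. Hence $i^*[p_0^*x_{4k}]=0$; moreover, the entire indeterminacy $\Im\,p_0^*$ identified in Proposition \ref{prop:circb} is swept away by $i^*$, so
$$
i^*\overset{\circ}{\beta}_{2k+1}=[i^*j^*z_{4k}]\in H^{4k}(\mathrm{IA}_{2k+2};\R)
$$
unambiguously, for any representative of $\overset{\circ}{\beta}_{2k+1}$ within its indeterminacy.

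For the second step I would link $[i^*j^*z_{4k}]$ to $T\beta^0_{2k+1}$ by choosing compatible cochain representatives. Pick $w_{4k}\in C^{4k}(\mathrm{Out}\,F_n;\R)$ with $\delta w_{4k}=\tilde{p}^*b_{2k+1}$, where $\tilde{p}:\mathrm{Out}\,F_n\to\mathrm{GL}(n,\Z)$, and set $z_{4k}:=\bar{q}^*w_{4k}$, where $\bar{q}:\mathrm{Aut}\,F_n\to\mathrm{Out}\,F_n$ is the canonical quotient. Since $p=\tilde{p}\circ\bar{q}$, this $z_{4k}$ is a legitimate choice in the construction of $\overset{\circ}{\beta}_{2k+1}$; any alternative differs by a cocycle on $\mathrm{Aut}\,F_n$, which is a coboundary in the stable range by Theorem \ref{thm:galatius}, so the class $[i^*j^*z_{4k}]$ is unaffected. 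Then the diagram
$$
\begin{CD}
\mathrm{IA}_{2k+2} @>\kappa>> \mathrm{IA}_n @>q>> \mathrm{IOut}_n \\
@ViVV @ViVV @ViVV \\
\mathrm{Aut}\,F_{2k+2} @>j>> \mathrm{Aut}\,F_n @>\bar{q}>> \mathrm{Out}\,F_n
\end{CD}
$$
(in which $\kappa$ is the stabilization inclusion and the three vertical arrows are the natural inclusions, all denoted by $i$ following the paper's convention) yields
$$
i^*j^*z_{4k}=(j\circ i)^*\bar{q}^*w_{4k}=\kappa^*q^*(i^*w_{4k}),
$$
whose cohomology class is $\kappa^*T\beta^0_{2k+1}$, equal to $T\beta^0_{2k+1}\in H^{4k}(\mathrm{IA}_{2k+2};\R)$ by the stability clause of Definition \ref{def:Tb}.

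The $\mathrm{GL}(2k+2,\Z)$-invariance demanded in the statement is then automatic from the invariance of $T\beta^0_{2k+1}$ established in Proposition \ref{prop:binv}. I foresee no serious obstacle: the argument is essentially a diagram chase together with the elementary observation that cohomology pulled back along a trivial homomorphism vanishes in positive degrees. The only subtlety, namely the freedom to pick $z_{4k}=\bar{q}^*w_{4k}$ as a pullback from $\mathrm{Out}\,F_n$, is immediately handled by Galatius' stable vanishing theorem.
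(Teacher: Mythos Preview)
Your proof is correct and follows the same route as the paper: the key observation is that $i^*p_0^*x_{4k}=0$ because $p_0\circ i$ is the trivial homomorphism, which is essentially the paper's entire one-line argument. You are more explicit than the paper on two points the paper suppresses under ``follows easily from the definitions'': you reconcile the fact that the cochain $z_{4k}$ in \S\ref{sec:cgm} lives on $\mathrm{Aut}\,F_n$ while the one defining $T\beta_{2k+1}$ in \S\ref{sec:scia} lives on $\mathrm{Out}\,F_n$ (by choosing the former as $\bar{q}^*$ of the latter and invoking Galatius to see the choice is immaterial), and you note that since $2k+2$ sits below the stable range, $T\beta^0_{2k+1}$ at that rank is \emph{by definition} the pullback $\kappa^*$ from the stable range, making the final identification tautological.
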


\begin{proof}
This follows easily from the definitions of the two classes, because
$i^*p_0^* x_{4k} =0$ for any $x_{4k}$.
\end{proof}

To state our conjecture, recall that Conant and Vogtmann \cite{cov} proved that
the class $\mu_k$ has a natural lift in $H_{4k}(\mathrm{Aut}\, F_{2k+2};\Q)$
under the projection $\mathrm{Aut}\,F_{2k+2}\rightarrow \mathrm{Out}\,F_{2k+2}$
which we denote by the same letter.
We refer to \cite{kaw}
for the relation between 
the rational homology groups of $\mathrm{Aut}\,F_n$ and $\mathrm{Out}\,F_n$ in general.

\begin{conj}
For a suitable choice of $\overset{\circ}{\beta}_{2k+1}$ within the indeterminacy, we have
$$
\langle \overset{\circ}{\beta}_{2k+1},\mu_k\rangle\not=0.
$$
\label{conj:mu}
\end{conj}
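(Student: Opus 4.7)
The plan is to re-express $\langle\overset{\circ}{\beta}_{2k+1},\mu_k\rangle$ as the evaluation of the Borel regulator $\beta_{2k+1}$ on an auxiliary $(4k+1)$-cycle in $\mathrm{GL}(N,\Z)$ obtained from two different null-homologies of the image of $\mu_k$—one from Galatius' theorem, the other from Conjecture \ref{conj:cv}—and then to identify this cycle through graph-complex and Lie algebra cohomology models.

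Fix a cycle $c\in Z_{4k}(\mathrm{Aut}\,F_{2k+2};\Q)$ representing the Conant--Vogtmann lift of $\mu_k$. A preliminary step is to verify $p_{0*}[c]=0\in H_{4k}(\mathrm{GL}(2k+2,\Z);\Q)$, which has the side benefit of making the indeterminacy of Proposition \ref{prop:circb} pair trivially with $\mu_k$; this is expected from the Lie-algebraic origin of $\mu_k$ (it arises from the abelianization of $\mathfrak{h}_{g,1}$ and should lie in the image of $H_*(\mathrm{IOut}_{2k+2};\Q)$) and can be checked directly from the definition. By Galatius' theorem (Theorem \ref{thm:galatius}) and the assumed Conjecture \ref{conj:cv}, one then picks $(4k+1)$-chains $W$ in $B\mathrm{Aut}\,F_N$ (for $N\gg 2k+2$) with $\partial W=(\iota\circ j)_*c$, where $\iota:\mathrm{Aut}\,F_{2k+2}\hookrightarrow\mathrm{Aut}\,F_N$ is stabilization, together with $V$ in $B\mathrm{GL}(2k+2,\Z)$ such that $\partial V=p_{0*}c$. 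Applying the identity $\langle\delta\phi,\sigma\rangle=\langle\phi,\partial\sigma\rangle$ to $\delta z_{4k}=p^*b_{2k+1}$ and $\delta x_{4k}=\bar{j}^*b_{2k+1}$ yields
\begin{equation*}
\langle\overset{\circ}{\beta}_{2k+1},\mu_k\rangle
=\langle b_{2k+1},\,p_*W\rangle-\langle b_{2k+1},\,\bar{j}_*V\rangle
=\langle\beta_{2k+1},\,[p_*W-\bar{j}_*V]\rangle,
\end{equation*}
where $p_*W-\bar{j}_*V$ is a $(4k+1)$-cycle in $B\mathrm{GL}(N,\Z)$ because both chains have boundary equal to $p_{0*}c$ after stabilization.

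The heart of the proof is then to identify the transgression class $[p_*W-\bar{j}_*V]\in H_{4k+1}(\mathrm{GL}(N,\Z);\R)$ and to exhibit it as a nonzero multiple of a class on which $\beta_{2k+1}$ evaluates non-trivially. I would approach this through Kontsevich's graph-complex description of $H_*(\mathrm{Out}\,F_n;\Q)$ combined with Morita's original construction of $\mu_k$ from the trace map on the Lie algebra of symplectic derivations. Pushing a chain-level representative of $c$ forward along $p$ and transgressing through the two null-homologies should land $[p_*W-\bar{j}_*V]$ in the relative Chevalley--Eilenberg complex of the pair $(\mathfrak{gl}(N,\R),\mathrm{O}(N))$, where the Borel regulator is classically represented; matching coefficients would give the desired non-vanishing, with the proportionality constant presumably involving $\zeta(2k+1)$ in analogy with Theorem \ref{thm:hb}.

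The principal obstacle is exactly this last identification. Beyond the conditional nature on Conjecture \ref{conj:cv} (open for $k\geq 3$), the transgression cycle depends delicately on the chain-level choices of $W$ and $V$, and establishing its non-triviality appears essentially as difficult as the direct non-triviality problem for $\mu_k$. All currently known non-triviality results are detected through quite different invariants (\cite{morita99}\cite{cov}\cite{gr}), so as a first test I would attempt the case $k=1$, where Conjecture \ref{conj:cv} is established (Lee--Szczarba) and $\mu_1$ is known to be non-trivial, and try to match the transgression class constructed above with the existing detection of $\mu_1$. A successful sanity check there would furnish strong evidence that the Borel regulator at the critical rank is the correct geometric invariant behind \emph{all} Morita classes, and would point the way to the first unknown case $k\geq 4$.
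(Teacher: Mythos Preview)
The statement you are attempting to prove is \emph{Conjecture}~\ref{conj:mu}: the paper does not prove it, and indeed the non-triviality of $\mu_k$ for $k\geq 4$ is precisely the open problem the paper is organized around. So there is no ``paper's own proof'' to compare against; the relevant question is whether your proposal closes the gap, and it does not.

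That said, your reformulation is correct and essentially rediscovers what the paper calls the \emph{dual version}. Your identity
\[
\langle\overset{\circ}{\beta}_{2k+1},\mu_k\rangle=\langle\beta_{2k+1},[p_*W-\bar{j}_*V]\rangle
\]
is the content of the paper's Conjecture~\ref{conj:mud} and the surrounding discussion: one builds a $(4k+1)$-cycle in $\mathrm{GL}$ from two null-homologies of (the image of) $\mu_k$ and asks whether the Borel class detects it. The paper's construction is slightly sharper than yours in two respects. First, rather than invoking Galatius' stable vanishing to produce $W$ in $\mathrm{Aut}\,F_N$ for $N\gg 0$, the paper uses the Conant--Vogtmann result that $\mu_k$ already dies after \emph{one} stabilization, landing the auxiliary cycle in $\mathrm{GL}(2k+3,\Z)$ at the critical rank. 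Second, the paper uses the Conant--Hatcher--Kassabov--Vogtmann description of $\mu_k$ as supported on an explicit abelian subgroup $\Z^{4k}$, which makes the chain $u_f$ concrete enough for the authors to contemplate a computer verification for $k=1$ (their final Remark).

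The genuine gap is exactly where you locate it: the identification of $[p_*W-\bar{j}_*V]$ with something on which $\beta_{2k+1}$ is known to be nonzero. Your proposed route through Kontsevich's graph complex and the relative Chevalley--Eilenberg complex is suggestive but not an argument; the paper offers no such identification either, which is why the statement remains a conjecture. Your ``preliminary step'' that $(p_0)_*\mu_k=0$ is also not verified in the paper (see the Remark following Conjecture~\ref{conj:mu}, which treats both possibilities), so that too would need an actual proof rather than an appeal to the Lie-algebraic origin of $\mu_k$. In short: your reduction is sound and matches the paper's own heuristics, but it reduces one open conjecture to another (Conjecture~\ref{conj:mud}), and the substantive step---non-vanishing of the Borel pairing on the transgression cycle---is left untouched.
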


\begin{remark}
If the projected image $(p_0)_*(\mu_k)\not=0\in H_{4k}(\mathrm{GL}(2k+2,\Z);\Q)$,
then the above conjecture holds ``trivially" because we can add to $\overset{\circ}{\beta}_{2k+1}$ the pull back 
under $p_0^*$ of
an element in $H^{4k}(\mathrm{GL}(2k+2,\Z);\Q)$ which detects $(p_0)_*(\mu_k)$. On the other hand,
if $(p_0)_*(\mu_k)=0$, then it is easy to see that the value $\langle \overset{\circ}{\beta}_{2k+1},\mu_k\rangle$
does not depend on the choice of $\overset{\circ}{\beta}_{2k+1}$ within the indeterminacy.
\end{remark}

We have also the following ``dual version" of the above argument. It applies to the
case $(p_0)_*(\mu_k)=0$ and is based on the following
two results. One is due to Conant and Vogtmann \cite{covs} who proved that 
$\mu_k\in H_{4k}(\mathrm{Aut}\,F_{2k+2};\Q)$ vanishes in $H_{4k}(\mathrm{Aut}\,F_{2k+3};\Q)$
under the natural inclusion $F_{2k+2}\subset F_{2k+3}$. The other is one particular
result, in a recent remarkable paper \cite{chkv} by
Conant, Hatcher, Kassabov and Vogtmann, that $\mu_k$
is supported on a certain free abelian subgroup $\Z^{4k}\subset \mathrm{Out}\, F_{2k+2}$.
Here they also give a new proof of the above vanishing result under one stabilization.

Our strategy is as follows. By using the above mentioned result in \cite{chkv},
we can construct an explicit $(2k+3)$-chain $u_f\in C_{2k+3}(\mathrm{Out}\, F_{2k+3};\Q)$
which bounds the abelian cycle $j(\Z^{4k})\subset \mathrm{Out}\, F_{2k+3}$.
On the other hand, by the assumption $(p_0)_*(\mu_k)=0$, there exists
a chain $u_b\in C_{2k+3}(\mathrm{GL}(2k+2,\Z);\Q)$ which bounds the abelian cycle
$(p_0)_*(\Z^{4k})\subset \mathrm{GL}(2k+2,\Z)$ (it can be checked that $(p_0)_*$
is injective on $\Z^{4k}$). Then consider the element
$$
u=p_*(u_f)-\bar{j}_*(u_b)\in Z_{4k+1}(\mathrm{GL}(2k+3,\Z);\Q)
$$
which is a cycle because
$$
\partial u=\partial u_f-\partial u_b=p_*j_*(\Z^{4k})-\bar{j}_* (p_0)_* (\Z^{4k})=0.
$$

\begin{conj}
For a suitable choice of $u$, we have
$$
\langle \beta_{2k+1},[u]\rangle\not=0.
$$
\label{conj:mud}
\end{conj}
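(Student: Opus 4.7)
The strategy is to reduce the pairing $\langle \beta_{2k+1}, [u]\rangle$ to an arithmetic quantity that can be shown to be non-zero, in direct analogy with the identification carried out for the mapping class group in Theorem \ref{thm:hb}. The first step is to make the bounding chains $u_f$ and $u_b$ completely explicit. The Conant-Hatcher-Kassabov-Vogtmann theorem cited above furnishes concrete generators of the torus $\Z^{4k}\subset\mathrm{Out}\,F_{2k+2}$ on which $\mu_k$ is supported, and their new proof of the vanishing of $\mu_k$ in $H_{4k}(\mathrm{Out}\,F_{2k+3};\Q)$ should, upon inspection, yield an explicit $(4k+1)$-chain $u_f$ with $\partial u_f=j(\Z^{4k})$. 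On the $\mathrm{GL}$ side, one first analyzes the image $(p_0)_*(\Z^{4k})\subset \mathrm{GL}(2k+2,\Z)$ as an abelian cycle and then, under the assumption $(p_0)_*(\mu_k)=0$, writes down a concrete bounding chain $u_b$ using the known structure of abelian cycles in $\mathrm{GL}$ together with Borel's stable cohomology calculation.

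With $u$ in hand, the second step is to evaluate the Borel pairing against it via an explicit cocycle representative of $\beta_{2k+1}$, for instance Hamida's cocycle $b^H_{2k+1}$ from \cite{hamida} or the Dupont-style representative of \cite{d}. Since $u=p_*(u_f)-\bar{j}_*(u_b)$ is the difference of two cappings of the same abelian cycle (viewed in $\mathrm{GL}(2k+3,\Z)$), a Stokes-type reformulation should rewrite $\langle \beta_{2k+1},[u]\rangle$ as a transgression-type integral over the pair $(u_f,u_b)$. In perfect parallel with the constant $(-1)^k \zeta(2k+1)/(2(2k)!)$ appearing in Theorem \ref{thm:hb}, one would expect this to reduce to a rational multiple of $\zeta(2k+1)$ times a combinatorial multiplicity read off from the abelian torus; non-vanishing of this multiplicity would then be a purely combinatorial matter.

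The main obstacles are twofold. Formally, the cycle $u$ has an a priori indeterminacy coming from the choice of $u_b$, controlled by $H_{4k+1}(\mathrm{GL}(2k+2,\Z);\Q)$ modulo the kernel of $\bar{j}_*$; this must be tracked and matched against the freedom in the construction of $u_f$, precisely as the indeterminacy of $\overset{\circ}{\beta}_{2k+1}$ was tracked in Proposition \ref{prop:circb}. The deeper, genuinely geometric obstacle is that the conjecture implies non-triviality of every $\mu_k$, a long-standing open problem beyond $k=1,2,3$, so no purely formal argument can suffice. One should therefore expect to import substantive input from the Kontsevich graph-complex realization of $\mu_k$ used in \cite{kontsevich1} and \cite{morita99}, translating the graph-theoretic representation of $j(\Z^{4k})$ and of its two cappings into the language of lattice cocycles on which the Borel regulator is computable. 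The identifications of Sections 3 and 4, where the analogous secondary construction is recognized inside already-known characteristic classes, should serve as the templates for carrying out this unstable translation.
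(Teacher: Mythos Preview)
The statement you are attempting to prove is a \emph{conjecture} in the paper, not a theorem; the authors provide no proof. Your proposal is therefore not comparable to any argument in the paper, because there is none. What the paper does contain is Proposition~\ref{prop:circb}-style bookkeeping of the indeterminacy, the subsequent proposition showing that Conjecture~\ref{conj:mud} implies $\mu_k\neq 0$, and a brief remark stating that the authors plan to verify the case $k=1$ by evaluating Hamida's cocycle $b^H_{2k+1}$ on an explicit cycle via computer calculation in the spirit of \cite{set}.

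Your write-up is honest about the situation: you correctly observe that the conjecture would imply non-triviality of all $\mu_k$, which is open for $k\geq 4$, and hence that no formal argument can close the gap. But this means your text is a heuristic strategy sketch, not a proof. Several of its steps are not established facts: the claim that the Conant--Hatcher--Kassabov--Vogtmann argument ``should, upon inspection'' yield an explicit $u_f$; the hope that a ``Stokes-type reformulation'' will produce a rational multiple of $\zeta(2k+1)$ times a combinatorial multiplicity; and the expectation that non-vanishing of that multiplicity is ``purely combinatorial.'' None of these is justified, and the last one is precisely the hard content of the conjecture. The analogy with Theorem~\ref{thm:hb} is suggestive but not a reduction: in that theorem the non-triviality is imported from Igusa's independent computation of higher torsion, and there is no known analogue here.

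If your intent was to record a plausible line of attack rather than a proof, that is reasonable, and it is broadly consistent with the authors' own stated plan for $k=1$ (explicit cocycle plus explicit chain plus direct evaluation). But it should be presented as such, not as a proof proposal.
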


\begin{prop}
If Conjecture \ref{conj:mud} holds for $k$, then $\mu_k\not= 0$.
\end{prop}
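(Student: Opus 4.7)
The plan is to prove the contrapositive: granting Conjecture \ref{conj:cv} throughout (as the whole section does), I assume $\mu_k = 0 \in H_{4k}(\mathrm{Out}\,F_{2k+2};\Q)$ and deduce that $\langle \beta_{2k+1},[u]\rangle = 0$ for \emph{every} admissible cycle $u$ in the construction preceding Conjecture \ref{conj:mud}, which directly contradicts that conjecture.

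The first step is to check that $\langle \beta_{2k+1},[u]\rangle$ is independent of the chains $u_f$ and $u_b$ chosen. Given a second admissible pair $(u_f', u_b')$, the differences $u_f - u_f'$ and $u_b - u_b'$ are $(4k+1)$-cycles in $\mathrm{Out}\,F_{2k+3}$ and $\mathrm{GL}(2k+2,\Z)$ respectively, so
\[
[u] - [u'] = [p_*(u_f - u_f')] - [\bar{j}_*(u_b - u_b')]
\]
in $H_{4k+1}(\mathrm{GL}(2k+3,\Z);\Q)$. Pairing $\beta_{2k+1}$ with the first term gives $\langle p^*\beta_{2k+1}, u_f - u_f'\rangle = 0$ by the Igusa--Galatius vanishing of $\beta_{2k+1}$ on $\mathrm{Out}\,F_n$, and pairing with the second term gives $\langle \bar{j}^*\beta_{2k+1}, u_b - u_b'\rangle = 0$ by Conjecture \ref{conj:cv}. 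Hence $\langle \beta_{2k+1},[u]\rangle$ is a well-defined numerical invariant.

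The second step is to exhibit, under the hypothesis $\mu_k = 0$, one admissible choice for which $u$ itself is zero. By the Conant--Hatcher--Kassabov--Vogtmann result invoked in the paper, the abelian cycle $c \in Z_{4k}(\mathrm{Out}\,F_{2k+2};\Q)$ supported on $\Z^{4k}$ represents a nonzero rational multiple of $\mu_k$, so $\mu_k = 0$ produces a chain $v \in C_{4k+1}(\mathrm{Out}\,F_{2k+2};\Q)$ with $\partial v = c$. Setting $u_f := j_*(v)$ and $u_b := (p_0)_*(v)$ gives valid bounding chains for $j_*(c)$ and $(p_0)_*(c)$ respectively, and the commutativity $p \circ j = \bar{j} \circ p_0$ yields
\[
u = p_*(u_f) - \bar{j}_*(u_b) = \bar{j}_*(p_0)_*(v) - \bar{j}_*(p_0)_*(v) = 0,
\]
so $\langle \beta_{2k+1},[u]\rangle = 0$ on this choice. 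Combined with the invariance from the first step, this forces the pairing to vanish for every admissible $u$, contradicting Conjecture \ref{conj:mud}.

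The only nontrivial point is the precise interpretation of ``$\mu_k$ is supported on $\Z^{4k}$'' from \cite{chkv}: one needs that the abelian cycle $c$ represents a nonzero multiple of $\mu_k$ in $H_{4k}(\mathrm{Out}\,F_{2k+2};\Q)$ itself, so that the hypothesis $\mu_k = 0$ really produces a bounding chain $v$ inside $\mathrm{Out}\,F_{2k+2}$ (not merely after some further stabilization). With this interpretation granted, the argument is a formal zigzag that uses exactly the same two vanishing inputs already invoked to build $u$ in the first place, and no additional geometric input is needed.
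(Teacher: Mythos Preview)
Your proof is correct and follows essentially the same approach as the paper's: both argue the contrapositive, use a bounding chain in $\mathrm{Out}\,F_{2k+2}$ (your $v$, the paper's $u_0$) to produce the specific choice $u_f=j_*(v)$, $u_b=(p_0)_*(v)$ giving $u=0$, and handle the indeterminacy via the Igusa--Galatius vanishing on $\mathrm{Out}\,F_{2k+3}$ together with Conjecture~\ref{conj:cv} on $\mathrm{GL}(2k+2,\Z)$. Your explicit separation into an invariance step and a vanishing step, and your remark on the needed interpretation of ``supported on $\Z^{4k}$'', are welcome clarifications but do not change the argument.
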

\begin{proof}
It suffices to prove the following. If $\mu_k=0$, then $\langle \beta_{2k+1},[u]\rangle =0$
for any choice of the cycle $u$. By the assumption that $\mu_k=0$, there exists a 
chain $u_0\in C_{4k+1}(\mathrm{Out}\,F_{2k+2};\Q)$ which bounds $\Z^{4k}\subset \mathrm{Out}\,F_{2k+2}$.
Then we can set $u_f=j_*(u_0)$ and $u_b=(p_0)_*(u_0)$ in the above construction of the cycle $u$,
which implies $u=0$. On the other hand, in this case the indeterminacy of $u$ comes from
adding certain $(4k+1)$-cycles of the two groups $\mathrm{Out}\, F_{2k+3}$ and $\mathrm{GL}(2k+2,\Z)$
on which the Borel class $\beta_{2k+1}$ takes the value $0$, the former group by the vanishing
theorem of Igusa and Galatius and the latter group by the assumption that Conjecture \ref{conj:cv} holds.
This completes the proof.
\end{proof}

\begin{remark}
We are planning to prove Conjecture \ref{conj:mud} for the case $k=1$ by using 
Hamida's cocycle for $\beta_{2k+1}$ given in \cite{hamida} and a computer computation
along the line of \cite{set}.
\end{remark}

\section{Prospects and final remarks}\label{sec:cp}

Here we discuss a relationship between our secondary classes $T\beta_{2k+1}, \hat{\beta}_{2k+1}$
and an important open question about the cohomology of the Torelli group $\mathcal{I}_g$ whether
the $\mathrm{MMM}$ classes of {\it even} indices are non-trivial in $H^{4*}(\mathcal{I}_g;\Q)$
or not. We mention that even the non-triviality of $e^2\in H^4(\mathcal{I}_{g,*};\Q)$ is not
known where $\mathcal{I}_{g,*}$ denotes the Torelli group of a genus $g$ surface with base point
and $e\in H^2(\mathcal{I}_{g,*};\Z)$ denotes the Euler class of the tangent bundle along the fibers of
surface bundles. See Sakasai \cite{sakasai} for an attempt to attack this problem
and Salter \cite{salter} for a more recent work.

\begin{conj}[Church and Farb \cite{cf}, Conjecture 6.5]
The $\mathrm{GL}(n,\Z)$-invariant part of the stable rational cohomology of $\mathrm{IA}_n$ vanishes.
\label{conj:cf}
\end{conj}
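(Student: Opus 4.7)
The plan is to approach Conjecture \ref{conj:cf} via the Hochschild--Serre spectral sequence for the extension
$$
1 \to \mathrm{IA}_n \to \mathrm{Aut}\,F_n \to \mathrm{GL}(n,\Z) \to 1,
$$
namely
$$
E_2^{p,q} = H^p(\mathrm{GL}(n,\Z); H^q(\mathrm{IA}_n;\Q)) \Longrightarrow H^{p+q}(\mathrm{Aut}\,F_n;\Q).
$$
By Galatius (Theorem \ref{thm:galatius}), the abutment vanishes stably in positive degrees, so every class on the $E_\infty$-page must be trivial in this range. The group we wish to kill is exactly $E_2^{0,q} = H^q(\mathrm{IA}_n;\Q)^{\mathrm{GL}(n,\Z)}$, sitting on the vertical edge of this spectral sequence.

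First I would observe that no differential can \emph{enter} the column $p=0$, since $E_r^{-r,q+r-1}=0$. Hence any class in $E_2^{0,q}$ not hit by an outgoing transgression $d_r\colon E_r^{0,q}\to E_r^{r,q-r+1}$ would survive to $E_\infty^{0,q}$, contradicting Galatius. So the conjecture is equivalent to showing that these outgoing transgressions must vanish: once they do, the surviving invariants persist to $E_\infty$ and are killed by the abutment calculation, forcing them to be zero at $E_2$. The column $E_2^{p,0}$ is stably the exterior algebra on the Borel classes $\beta_{2k+1}$, all of which must die by Galatius, and Igusa's vanishing theorem says they already die in the analogous spectral sequence for $\mathrm{Out}\,F_n$; tracking how these classes get killed constrains the global pattern of differentials.

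To make the analysis concrete I would combine Hochschild--Serre with the Andreadakis filtration of $\mathrm{IA}_n$, whose associated graded is governed by the free Lie algebra on $H = H_1(F_n;\Z)$ with its natural $\mathrm{GL}(n,\Z)$-action. Stable $\mathrm{GL}(n,\Z)$-invariants in Schur functors of the standard representation are classified by Weyl's classical invariant theory, giving an a priori list of candidate invariants on the associated graded. The hope is that each such candidate either lifts to a trace-like class (in the sense of Kawazumi and Morita) that can be shown to bound in Hochschild--Serre, or does not lift at all from the associated graded to $H^*(\mathrm{IA}_n;\Q)$. A careful comparison of the two spectral sequences should then reduce the conjecture to a combinatorial statement about plethysms of the standard $\mathrm{GL}(n)$-representation that can be checked directly.

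The main obstacle is that $H^*(\mathrm{IA}_n;\Q)$ is essentially unknown beyond degree two, and the conjecture stands in delicate tension with the central theme of the present paper: the $\mathrm{GL}(n,\Z)$-invariant classes $T\beta^0_{2k+1}$ of Definition \ref{def:Tb} must vanish if Conjecture \ref{conj:cf} holds with real coefficients, but their non-triviality on $\mathrm{IA}_n$ is itself an open problem, equivalent to the non-triviality of Igusa's higher $\mathrm{FR}$ torsion on that group. Any complete proof therefore either requires a genuinely new stable cohomological input for $\mathrm{IA}_n$, or must simultaneously resolve the non-triviality question for Igusa's higher torsion classes on $\mathrm{IOut}_n$; it is precisely this tension that makes Conjecture \ref{conj:cf} both attractive and hard.
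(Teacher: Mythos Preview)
The statement you are addressing is a \emph{conjecture} (due to Church and Farb), and the paper does not prove it; it is recorded in \S\ref{sec:cp} precisely as an open problem, and the surrounding discussion only draws out its consequences (in particular, that an affirmative answer would force all $e_{2k}$ to vanish on the Torelli group via $T\beta^0_{2k+1}=\tau_{2k}(\mathrm{IA}_n)$). So there is no ``paper's own proof'' to compare against, and your proposal should be read as a research outline rather than a proof.

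As an outline, your spectral-sequence framing is the same one the paper uses in the paragraph following Conjecture~\ref{conj:cf}: the paper explicitly writes $E_2^{p,q}=H^p(\mathrm{GL}(n,\Z);H^q(\mathrm{IA}_n;\Q))$, notes that $\beta_{2k+1}\in E_2^{4k+1,0}$ must die by Galatius, and identifies $T\beta^0_{2k+1}$ with the edge term $E_2^{0,4k}$. Your observation that the conjecture is equivalent to the vanishing of all outgoing differentials from the $p=0$ column is correct but tautological, since no differentials enter that column; it does not reduce the difficulty. The substantive part of your plan---passing to the Andreadakis associated graded and invoking classical invariant theory for $\mathrm{GL}(n)$---is a natural idea, but it faces a well-known obstruction you do not address: one must control the passage from the associated graded back to $H^*(\mathrm{IA}_n;\Q)$, and the cohomology of $\mathrm{IA}_n$ is unknown beyond low degrees, so there is no mechanism in your sketch for showing that candidate invariants actually lift (or fail to lift). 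You yourself flag this in your final paragraph, and you correctly note the tension with the non-triviality question for $T\beta^0_{2k+1}$; that tension is exactly what the paper highlights in its Question following Conjecture~\ref{conj:cf}. In short, your proposal accurately locates the problem but does not supply the missing input, and neither does the paper---because the statement remains open.
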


We would like to point out that an affirmative solution to the above conjecture implies that
all the $\mathrm{MMM}$ classes of {\it even} indices are trivial on the Torelli group.
Indeed, our secondary classes $T\beta^0_{2k+1}\in H^{4k}(\mathrm{IA}_n;\R)^{\mathrm{GL}(n,\Z)}$
are stable classes and $\mathrm{GL}(n,\Z)$-invariant (see Proposition \ref{prop:binv} and Definition \ref{def:Tb}). 
Therefore if we assume the above conjecture, then we have $T\beta^0_{2k+1}=0$. On the other hand,
Theorem \ref{thm:Tb} shows that $T\beta^0_{2k+1}$ is equal to Igusa's higher torsion $\tau_{2k}(\mathrm{IA}_n)$.
We can then use Igusa's result in \cite{i} that the restriction of $\tau_{2k}(\mathrm{IA}_n)$
to the Torelli group $\mathcal{I}_{g,1}$ is 
a non-zero multiple of $e_{2k}$ to conclude that this class vanishes.

If we consider the spectral sequence for the rational cohomology group of the extension
$$
1\rightarrow \mathrm{IA}_{n}\rightarrow \mathrm{Aut}\,F_{n}\rightarrow \mathrm{GL}(n,\Z)\rightarrow 1,
$$
the $E_2$ term is given by
$$
E_2^{p,q}=H^p(\mathrm{GL}(n,\Z);H^q(\mathrm{IA}_n;\Q))
$$
and the Borel classes are described as 
$$
\beta_{2k+1}\not=0 \in E_2^{4k+1,0}.
$$
Because of the vanishing theorem of Galatius, these classes do not survive in the $E_\infty$ term.
Hence among the following groups
\begin{equation}
H^{4k-i}(\mathrm{GL}(n,\Z);H^i(\mathrm{IA}_n;\Q))\quad (i=1,\ldots,4k),
\label{eq:ik}
\end{equation}
there should exist at least one non-trivial group that kills $\beta_{2k+1}$.
The secondary class 
$$
T\beta^0_{2k+1}=\tau_{2k}(\mathrm{IA}_n) \in H^{4k}(\mathrm{IA}_n;\R)^{\mathrm{GL}(n,\Z)}
\cong H^{0}(\mathrm{GL}(n,\Z);H^{4k}(\mathrm{IA}_n;\R))
$$
corresponds to the last case of $i=4k$.

Conjecture \ref{conj:cf}, applied to $\beta^0_{2k+1}$,
is equivalent to saying that it is not killed in the last step,
namely $T\beta^0_{2k+1}=0$ so that
it must be killed somewhere in the range $1\leq i\leq 4k-1$.
We would like to ask the validity of, so to speak,
the most optimistic possibility.

\begin{quest}
Are the secondary classes $T\beta^0_{2k+1}\in H^{4k}(\mathrm{IA}_n;\R)^{\mathrm{GL}(n,\Z)}$ 
non-trivial in a certain stable range? If not, which group  in \eqref{eq:ik} $(1\leq i\leq 4k-1)$
kills $\beta_{2k+1}$?
\end{quest}

\begin{remark}
There have been obtained several important results concerning the homology of 
the group $\mathrm{IA}_n$ including \cite{kaw}\cite{pettet}\cite{bbm}\cite{chp}\cite{dp}.
However, no non-trivial elements in $H^{*}(\mathrm{IA}_n;\Q)^{\mathrm{GL}(n,\Z)}$
were found. The problem of determining whether this group is non-trivial or not
remains a mystery.
\end{remark}

\begin{remark}
We think that the non-triviality of the Morita classes is a very important problem
not only in the theory of cohomology of automorphism groups of free groups
but also in low dimensional geometric topology.
This is because, we expect that the ``group version" defined in \cite{morita08},
of these classes 
(the Lie algebra version)
would detect
the difference between the smooth and topological categories in four-dimensional
topology (see \cite{morita06}\cite{mss8}), although it will need many more years for this expectation to be
clarified.
\end{remark}

\begin{remark}
In this paper, we defined our secondary classes
by making use of two different ways of vanishing of the Borel
regulator classes. It would be meaningful to recall
here the following work.
Namely, by making use of two different cocycles representing the 
first $\mathrm{MMM}$ class, the first named author defined in \cite{morita91} a secondary characteristic class 
which is an $\mathcal{M}_g$-invariant
homomorphism
$$
d:\mathcal{K}_g\rightarrow \Q
$$
where $\mathcal{K}_g\subset \mathcal{M}_g$ denotes the kernel of the Johnson homomorphism \cite{johnson}.

It was proved that this is a manifestation of the Casson invariant in the
structure of the mapping class group.
\end{remark}

\begin{remark}
More generally, by comparing two cocycles for the $\mathrm{MMM}$ classes $e_{2k-1}$ of {\it odd}
indices, one from $\mathrm{Sp}(2g,\Z)$ and the other given in \cite{morita96}\cite{km},
certain secondary classes
$$
d_{k}\in H^{4k-3}(\mathcal{K}_g,\Q)^{\mathcal{M}_g}\quad (k=1,2,\ldots)
$$
were defined in \cite{morita99}\cite{morita06}, where $d_1$ is the same as $d$ in the previous remark.

In this paper, we have constructed a new cocycle for the even class $e_{2k}$ which is essentially different from the one
given in \cite{morita96}\cite{km}. It would be interesting to study whether the difference between these
two cocycles will give rise to certain secondary invariants for a suitable subgroup of the mapping class group.
\end{remark}

\vspace{1cm}
\bibliographystyle{amsplain}

\end{document}